\providecommand{\U}[1]{\protect\rule{.1in}{.1in}}
\newtheorem{theorem}{Theorem}
\newtheorem{algorithm}[theorem]{Algorithm}
\newtheorem{definition}[theorem]{Definition}
\newtheorem{example}[theorem]{Example}
\newtheorem{lemma}[theorem]{Lemma}
\newtheorem{proposition}[theorem]{Proposition}
\newenvironment{proof}[1][Proof]{\noindent\textbf{#1.} }{\ \rule{0.5em}{0.5em}}
\begin{document}

\title{The Schubert normal form of a 3-bridge link and the 3-bridge link group}
\author{Margarita Toro and Mauricio Rivera\\Universidad Nacional de Colombia, Medell\'{\i}n, Colombia.\\mmtoro@unal.edu.co and mrivera@unal.edu.co}
\date{November 2016}
\maketitle

\begin{abstract}
We introduce the Schubert form a $3$-bridge link diagram, as a generalization
of the Schubert normal form of a $3$-bridge link. It consists of a set of six
positive integers, written as $\left(  p/n,q/m,s/l\right)  $, with some
conditions and it is based on the concept of $3$-butterfly. Using the Schubert
normal form of a $3$-bridge link diagram, we give two presentations of the
3-bridge link group. These presentations are given by concrete formulas that
depend on the integers $\left\{  p,n,q,m,s,l\right\}  .$ The construction is a
generalization of the form the link group presentation of the $2$-bridge link
$p/q$ depends on the integers $p$ and $q$.

\end{abstract}

\section{Introduction}

In \cite{HMTT4} it was introduced the butterfly presentation of a link diagram
as a generalization of the 2-bridge Schubert's notation. Moreover, the
particular concept of a $3$-butterfly was implemented in order to study
$3$-bridge links and to obtain a codification of a $3$-bridge link diagram.
Here, for our purpose, we do not need all the machinery of the butterfly
construction presented in \cite{HMTT1}, so we will take a different approach.
We will describe the construction of the codification by a direct and
combinatorial approach, using the ideas in \cite{Fe}, where Ferri constructed
the crystallization of the double cover of $S^{3}$, with a link as
ramification set. For any link diagram $L$ there is a strong relation between
crystallization of the double cover of $S^{3}$, with $L$ as the ramification
set, and the $3$-butterfly associated to $L$, that we will explain in
\cite{RiTo}. For any $n$-bridge link diagram the construction of an
$n$-butterfly is possible, see \cite{HMTT1}, but in this paper we want to be
specific and we will work only with $3$-bridge link diagrams.

To a 3-bridge diagram we associate a 3-butterfly that is described by a set of
six positive integers $\left\{  p,n,q,m,s,l\right\}  $, with some
restrictions, and then we define the \textit{Schubert form} of the link
diagram as $\left(  p/n,q/m,s/l\right)  ,$ for geometrical reasons that will
be explained in Section 1. As each 3-bridge link admits infinitely many
different link diagrams, the \textit{Schubert normal form }for a link $L$ is
defined by taking the minimum among all 3-bridge link diagrams of $L$
according to a lexicographical type of order, see \cite{HMTT4}. For the
purpose of this paper we only need the Schubert form of the link diagram, but
in further research and in the compilation of link tables, it will be
interesting to consider the \textit{Schubert normal form }of a link.

In this paper we find formulas for the over and under presentation of the
3-bridge link represented by the Schubert form $\left(  p/n,q/m,s/l\right)  $,
that depends on the integers $\left\{  p,n,q,m,s,l\right\}  $. The formula for
the under presentation of the 3-bridge link is a natural extension of the
formula for the presentation of the 2-bridge link $p/q$, that depends on the
integers $p$ and $q$.

The paper is organized as follows: in Section 1 we describe the construction
of a 3-butterfly associated to a 3-bridge link diagram $L$ and introduce the
Schubert form of $L$, that consists of a set of 6 positive integers, $\left(
p,n,q,m,s,l\right)  $,\ that captures the relevant information of the
3-butterfly and, therefore, of the 3-bridge diagram $L$. \ In Section
\ref{diagram} we describe a canonical diagram associate to a 3-butterfly
$\left(  p,n,q,m,s,l\right)  $, in a similar way to the canonical diagram of a
2-bridge link, see \cite{Sch}. In Section \ref{secorien} we will give an
orientation to this canonical diagram.

In Section \ref{secpermu} we define two permutations, $\gamma$ and $\phi$,
associated to the Schubert form $\left(  p/n,q/m,s/l\right)  $, and study the
composition $\mu=\gamma\phi$. The cyclic structure of $\mu$ is the key point
in the rest of the paper. A variation of the permutation $\mu$ is very useful
for the construction of a Gauss code for the link diagram, and, from there we
can find the Dowker code and we are able to compute link invariants, such as
the link group, the Seifert matrix, the Alexander, Jones and HOMFLY polynomials.

In Section \ref{secgroup}\ we present our main result, Theorems \ref{presover}%
, \ref{presoverpal} and \ref{presunder}, that give explicit presentations for
the knot group $\pi\left(  L\right)  $ of a $3$-bridge link $L$. These
presentation are described by clear algorithms, that are easy to program in a
computer and depend on the integers in the Schubert form of the link diagram.
In the last section we propose a special family of links, $\left(
p/n,p/n,p/n\right)  $, that have a strong symmetry that is reflected in the
group presentation. This symmetry could be exploited in the study of the
representations into SL$\left(  2,\mathbb{C}\right)  $ of the link group.

Some authors allow that any $n$-bridge link diagram can be consider as a
$k$-bridge link diagram, for any $k>n$, by considering bridges without any
undercrossings, see \cite{Neg1}. We neither allow this situation nor consider
a split link diagram with more than 3 components, as the one in Fig.
\ref{fig3}a, as a 3-bridge link diagram We work with 3-bridge link diagrams as
in \cite{BuZi} and \cite{Mur}.

\textbf{Remark on notation:} In \cite{Ri} the author uses subindexes and
denote a butterfly by $\left(  M_{1},N_{1},M_{2},N_{2},M_{3},N_{3}\right)  $.
In this paper we avoid the use of subindexes in the Schubert form, and prefer
to assign a different role to each integer, in that way we reach simpler formulas.

\section{Description of the 3-butterfly of a 3-bridge link
diagram\label{secdescrip}}

Let $L$ be a 3-bridge link such that the projection on the $xy$ plane is a
3-bridge diagram $D$. Let $a,b$ and $c$ the bridge projections. Draw an
ellipse around each of the bridges, in such a way that they are disjoint, and
they have the bridges as principal axes. Each ellipse will intercept the
diagram $D$ in an even number of points, that will be the vertices. We denote
by $P,Q$ and $S$ the ellipses around the bridges $a,b$ and $c$, respectively,
and let $2p\ $(resp. $2q$ and $2s$) the number of intersections of $P$ (resp.
$Q$ and $S)$ with the diagram $D$. Following \cite{HMTT1}, the ellipses
$P,Q,S$ are called butterflies.

Take the graph $R_{1}$ formed by the butterflies $P,Q,S$, the vertices and the
bridges. In each butterfly we have the bridge, that divides each butterfly in
two halves, that will be the wings. The reflection along the bridges inside
the butterflies is called $\gamma$. The segment of the underarcs that are
inside the butterflies are forgotten, but they can be recovered with the
reflection $\gamma$. The edges outside the butterflies will give the
information on how the butterflies intercept to each other, see Fig.
\ref{fig1}a. Each one of these edges connect two vertices of two butterflies,
we identify these vertices, to form a set that will be the vertices of our
graph. The identification will give an involution on the vertex of the graph,
that we call $\phi$.%

\begin{figure}
[ht]
\begin{center}
\includegraphics[
height=1.6077in,
width=4.6363in
]%
{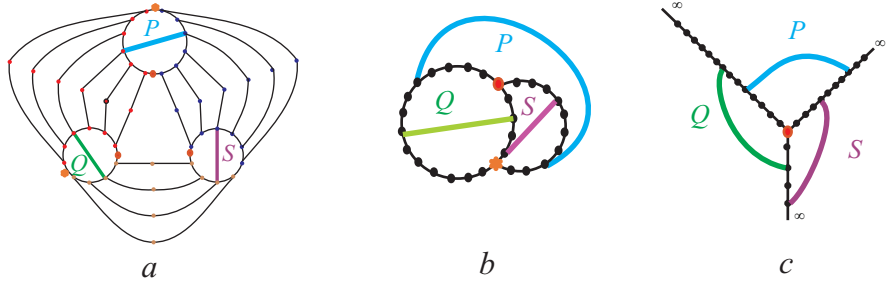}%
\caption{Graph for 3-butterflies.}%
\label{fig1}%
\end{center}
\end{figure}

We define the $3$-\textit{butterfly} as the graph $R=R_{1}/\phi$, formed with
the vertices of $R_{1}\ $identified by the involution $\phi$. We draw the
graph $R$ in any of the three forms shown in Fig. \ref{fig1}. If we consider
that the diagram $L$ is in $S^{2}=\partial B$, then the graph $R$ define a
polygonalization of $S^{2}$ formed by three polygons, as shown in Fig.
\ref{fig2}. Compare this simple construction with the formal one given in
\cite{HMTT1} and \cite{HMTT4}. When we identify the butterflies, there will
appear two new points, that will be denoted $0\ $and$\ \ast$. These two points
are fundamental, but they are not considered vertices of the $3$-butterfly.
There can be only two basic forms for the graph $R$, that are determined by
the way the butterflies $P,Q$ and $S$ intersect.%

\begin{figure}
[ht]
\begin{center}
\includegraphics[
height=1.5428in,
width=3.0995in
]%
{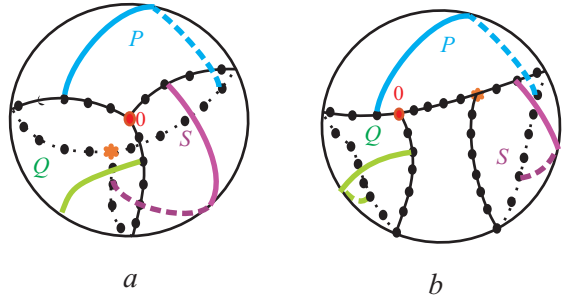}%
\caption{Types of $3$-butterflies.}%
\label{fig2}%
\end{center}
\end{figure}

\noindent\textbf{Type I}: The three butterflies intersect in the two points
$0\ $and$\ \ast$, see Fig. \ref{fig2}.a.

\noindent\textbf{Type II}: Two of the polygons do not intersect, see Fig.
\ref{fig2}.b. When a 3-bridge link diagram produces a type II butterfly, there
is a wave move, see \cite{Neg1} that allows us to construct a new 3-bridge
diagram with lower crossing number. So, we work only with type I butterflies,
such that there are no wave moves.

In order to obtain a canonical way to describe a $3$-butterfly, we will always
assume that
\begin{equation}
p\geq q\geq s\geq2, \label{restr0}%
\end{equation}
the condition $s\geq2$ is to ensure that each bridge has at least one
crossing. By rotating the plane and interchanging the points $0$ and $\ast$,
we can always obtain a $3$-butterfly diagram with $P$ at the top, $Q$ to the
left and $S$ to the right, and we read it in the counterclockwise direction,
$P\ Q\ S$, as shown in Fig. \ref{fig1}.

Let $\left\vert P\cap Q\right\vert =t\ $be the number of vertices between $P$
and $Q$,$\ v=\left\vert Q\cap S\right\vert $ and\ $w=\left\vert P\cap
S\right\vert $. As each butterfly intersects the other two, then $t,v$ and $w$
are positive integers that satisfy$\ 2p=t+w,\ 2q=t+v,\ $and $2s=v+w,$therefore%
\begin{equation}
t=p+q-s,\ \ \ v=q+s-p,\ \ \ w=p+s-q. \label{internum}%
\end{equation}

As we will only consider the link diagrams with $v\geq1$, then
\begin{equation}
p+1\leq s+q. \label{restr1}%
\end{equation}
So the integers $p,q$ and $s$ satisfy (\ref{restr0}) and (\ref{restr1}).
Reciprocally, if we have integers satisfying (\ref{restr0}) and (\ref{restr1})
we can construct the butterflies $P,Q$ and $S$.

Now let us describe the positions of the bridges. We orient clockwise each
butterfly. From the point $0$, following the orientation, we count the number
of vertices between $0$ and the vertex in which the bridge begins. We call
$n,m$ and $l$ the initial points of the bridges in $P,Q$ and $S$,
respectively. Clearly
\begin{equation}
1\leq n\leq p,\ \ \ \ \ 1\leq m\leq q,\ \ \ \ \ 1\leq l\leq s. \label{restr2}%
\end{equation}

We are working only with link diagrams with exactly three bridges and not only
two or one, this impose conditions on the integers $n,m$ and $l$. In
\cite{HMTT4} they found the conditions given in the following theorem.

\begin{theorem}
\label{teoclassi}Every $3$-butterfly\textit{\ defines a unique set of integers
}$\left\{  p,m,q,n,s,l\right\}  $\textit{\ such that }%
$\ \ \ \ \ \ \ \mathit{\ }$
\begin{align}
p &  \geq q\geq s\geq2,\ 1\leq n\leq p,\ \ \ \ 1\leq m\leq q,\ \ \ \ 1\leq
l\leq s,\ \ \ \ p+1\leq s+q\label{resteo}\\
n+m &  \not =q+1,\ \ \ \ n+l\not =p+1,\ \nonumber\\
\text{if}\ \ m &  >q+s-p\text{ then }n+m\not =2q+1\text{ and }n+m\not =%
2q-p+1\ \nonumber\\
\text{if }l &  <p-s\text{ then }n+l\not =p-s+1\text{ }\nonumber\\
\text{if }m &  \leq q+s-p\text{ then\ }m+l\not =s+1.\nonumber
\end{align}

Reciprocally, if a set of integers $\left\{  p,m,q,n,s,l\right\}  $ satisfies
conditions (\ref{resteo}) then it defines a $3$-butterfly.
\end{theorem}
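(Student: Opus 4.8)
The plan is to prove both directions of Theorem~\ref{teoclassi} by carefully analyzing what it means for a butterfly diagram to represent a link with \emph{exactly} three bridges. The forward direction asserts that the six integers satisfy the listed inequalities and non-equalities; the geometric inequalities in (\ref{resteo}) are already established in the text as (\ref{restr0}), (\ref{restr1}) and (\ref{restr2}), so the real content is the system of non-equality (``genericity'') conditions. My strategy is to show that each forbidden equality corresponds precisely to a degeneration in which one of the bridges fails to be a genuine bridge, i.e. the diagram collapses to a $2$-bridge (or $1$-bridge) diagram, or in which a wave move is available, producing a Type~II butterfly that we have agreed to exclude.

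First I would set up explicit coordinates for the vertices on each butterfly, using the labelings already introduced: the $2p$ vertices on $P$, the $2q$ on $Q$, the $2s$ on $S$, with the intersection counts $t=p+q-s$, $v=q+s-p$, $w=p+s-q$ from (\ref{internum}) telling us how the vertices of each butterfly are partitioned among the shared arcs with the other two butterflies. The bridge-start positions $n,m,l$ from (\ref{restr2}) then locate where the reflection $\gamma$ acts. The key computational device is to track, for each bridge endpoint, which arc (the $P\cap Q$, $Q\cap S$, or $P\cap S$ block) it lands in, as a function of $n,m,l$ and the block sizes $t,v,w$. A bridge degenerates exactly when the involution $\phi$ together with the reflection $\gamma$ identifies the two endpoints of a bridge with each other or sends a bridge directly onto another bridge, and each such coincidence translates, after substituting (\ref{internum}), into one of the listed linear relations such as $n+m=q+1$ or $n+l=p+1$.

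The heart of the argument is a case analysis organized by the position of the bridge of $Q$ relative to the shared blocks, which is why the conditions split on whether $m>q+s-p$ (equivalently $m>v$) versus $m\leq q+s-p$, and similarly on $l<p-s$ (equivalently $l<w-s$, governing the bridge of $S$). In each case I would compute the images of the bridge endpoints under $\gamma$ and $\phi$ and read off exactly when two of the three bridges merge or when a bridge becomes trivial; this produces the conditional relations $n+m\neq 2q+1$, $n+m\neq 2q-p+1$, $n+l\neq p-s+1$, and $m+l\neq s+1$. The reciprocal direction is then essentially a reversal: given integers satisfying (\ref{resteo}), the geometric inequalities guarantee that the three butterflies $P,Q,S$ can be assembled (as already noted after (\ref{restr1})), and the non-equality conditions guarantee that none of the degenerations just catalogued occurs, so the resulting diagram has exactly three honest bridges and is of Type~I.

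I expect the main obstacle to be bookkeeping rather than conceptual depth: one must be scrupulous about the counting conventions (clockwise versus counterclockwise orientation, whether one counts from $0$ or from $\ast$, and the inclusive/exclusive endpoint conventions that produce the $+1$ shifts in every relation), since an off-by-one error anywhere will misalign every forbidden equality. The case split on $m$ versus $v$ and $l$ versus $w-s$ is delicate because the geometry genuinely changes when a bridge endpoint crosses a block boundary, and it is here that the conditional form of the last three relations arises; verifying that the cases are exhaustive and that no additional degeneration has been overlooked is where I would spend the most care. Since the detailed verification is carried out in \cite{HMTT4}, I would likely present the coordinate setup and the translation dictionary in full and then refer to that source for the exhaustive case-by-case confirmation.
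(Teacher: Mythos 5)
You should know at the outset that the paper contains no proof of this theorem: it says explicitly ``In \cite{HMTT4} they found the conditions given in the following theorem,'' so the result is imported from the literature and there is no internal argument to compare yours against. That said, your reading of what the conditions mean is the correct one, and it agrees with the reformulation the paper itself records later in Section \ref{secpermu}: the inequalities in (\ref{resteo}) are exactly (\ref{restr0}), (\ref{restr1}) and (\ref{restr2}), while the non-equalities say precisely that the involution $\phi$ of Lemma \ref{lemmafi} never identifies one bridge endpoint with another, i.e.\ that no bicycle of $\phi$ lies in the set $\mathcal{F}$ of (\ref{forbiden}). Geometrically, such an identification joins the outgoing strands of two bridges by an arc with no undercrossings, so two overpasses merge and the diagram has at most two bridges; this is exactly the degeneration your plan sets out to catalogue, and the guards such as $m>q+s-p$ do arise, as you say, from deciding in which block ($P\cap Q$, $Q\cap S$ or $P\cap S$) the $\phi$-partner of a given endpoint lies (for instance, $\phi(b_{0})$ is an $a$-vertex precisely when $m>q+s-p$, and then $\phi(b_{0})\in\{a_{0},a_{p}\}$ translates into $n+m=2q+1$ or $n+m=2q-p+1$).

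However, as written your text is a plan rather than a proof: at the decisive step --- verifying that the twelve forbidden pairings in $\mathcal{F}$ translate into exactly the listed equalities, no more and no fewer --- you defer to \cite{HMTT4}, which is no more than what the paper does, so nothing new is actually established. If you do execute the plan, three corrections are needed. First, wave moves and type II butterflies are not what the non-equalities exclude; type II is already ruled out by the inequality $p+1\leq q+s$, and every forbidden equality corresponds to a merger of bridges. Second, your parenthetical ``$l<p-s$ (equivalently $l<w-s$)'' is false: $w-s=p-q$, which equals $p-s$ only when $q=s$; such an off-by-a-block error is exactly the kind of misalignment you warn against. Third, $\gamma$ fixes all six bridge endpoints (Lemma \ref{lemmagama}), so it plays no role in detecting these degenerations; the analysis concerns $\phi$ alone. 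Finally, the theorem also asserts \emph{uniqueness} of the six integers, which your proposal never addresses; that part rests on the normalizing conventions fixed in Section \ref{secdescrip} ($P$ at the top, counterclockwise reading of $P\,Q\,S$, clockwise orientation of each butterfly, counting from $0$), and a complete argument must check that these conventions pin down $\left(p,n,q,m,s,l\right)$.
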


Note that we may think that inside the butterfly $P$ (resp. $Q,$ $S$) the
bridge $a$ make a $\left(  n/p\right)  \pi$ rotation, (resp. $\left(
m/q\right)  \pi,\left(  l/s\right)  \pi$). For this geometrical reason we want
to use the notation $\left(  p/n,q/m,s/l\right)  $ instead of $\left\{
p,m,q,n,s,l\right\}  $, but $p/n$ is not considered as a rational number.

The conditions on the integers $\left\{  p,n,q,m,s,l\right\}  $ impose in
(\ref{resteo}) define a $3$-butterfly and a link diagram $L$, but it is
possible that $L$ is a split link with some trivial components, as the diagram
associated to $\left\{  5,1,5,2,5,1\right\}  $ shows, see Fig. \ref{fig3}.%

\begin{figure}
[ptb]
\begin{center}
\includegraphics[
height=1.4096in,
width=4.4183in
]%
{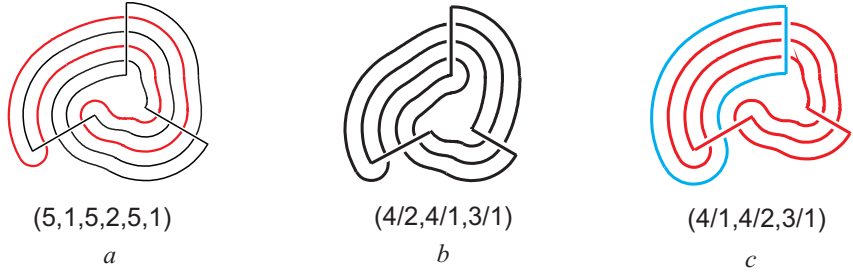}%
\caption{Diagrams associated to a non reduced butterfly and to two Schubert
forms}%
\label{fig3}%
\end{center}
\end{figure}

\begin{definition}
We say that $\left(  p,n,q,m,s,l\right)  $ is a 3-butterfly if the set of
integers $\left\{  p,n,q,m,s,l\right\}  $ satisfies the conditions
(\ref{resteo}). We say that a 3-butterfly is reduced if the associated diagram
is a 3-bridge diagram without any trivial components. We say that $\left(
p/n,q/m,s/l\right)  $ is a Schubert form of a 3-bridge link if the 3-butterfly
$\left(  p,n,q,m,s,l\right)  $ is reduced.
\end{definition}

We need to be careful with the relative order of the numbers in
$(p/n,q/m,s/l)$.

\begin{example}
If we change the order, it is possible that we get different Schubert forms.
The Schubert form $(4/2,4/1,3/1)\ $represents a knot and $\left(
4/1,4/2,3/1\right)  $ represents a two component link. See Fig. \ref{fig3}.
\end{example}

\section{Algorithm to draw a canonical 3-bridge link diagram\label{diagram}%
}

We associate to each 3-butterfly $\left(  p,n,q,m,s,l\right)  $ a canonical
diagram, in a similar way as the canonical diagram of a $2$-bridge link is
associated to $p/q$, see \cite{Kawa}. We draw the three bridges as
three segments: bridge $a$ as a vertical segment; bridge $b$ as a segment
forming a $120^{0}$ angle with the bridge $a$ and bridge $c$ as a segment
forming a $240^{0}$ angle with the bridge $a$.

We divide the bridge $a$ in $p$ segments, and we fix two points in each
division, one to the left and one to the right, except at the extreme points,
where there is only one. Label them with $A=\left\{  a_{0},a_{1}%
,\cdots,a_{2p-1}\right\}  $, in a counterclockwise sense, so the extreme
bridges are labeled $a_{0}$ and $a_{p}$. For the bridge $b$ we repeat the
process, but we divide the bridge in $q$ segments and label the points with
$B=\left\{  b_{0},\cdots,b_{2q-1}\right\}  $. For the bridge $c$ the number of
segments is $s$ and the labels are $C=\left\{  c_{0},\cdots,c_{2s-1}\right\}
$. The subscripts of $A\ $(resp. $B$ and $C$) are taken $\operatorname{mod}%
\left(  2p\right)  $, (resp. $\operatorname{mod}\left(  2q\right)  $ and
$\operatorname{mod}\left(  2s\right)  $).

To draw the link diagram we need to join, with appropriate arcs, the points
$a_{i},b_{j}\ $and $c_{k}$, $i\in\mathbb{Z}_{2p},j\in\mathbb{Z}_{2q},$ and
$k\in\mathbb{Z}_{2s},$ according to the rules given by permutations $\phi$ and
$\gamma$.

There are $t=p+q-s$ arcs between the $a$ and $b$ bridges, namely
\[
a_{n-1}b_{m},a_{n-2}b_{m+1},a_{n-3}b_{m+2},\ldots,a_{n-j}b_{m+j-1}%
,\ldots,a_{n-t}b_{m+t-1},
\]
likewise there are $v=q+s-p$ arcs between the $b$ and $c$ bridges, that are%
\[
b_{m-1}c_{l},b_{m-2}c_{l+1},b_{m-3}c_{l+2},\ldots,b_{m-j}c_{l+j-1}%
,\ldots,b_{m-v}c_{l+v-1},
\]
and, finally, $w=p+s-q$ arcs between the $c$ and $a$ bridges,%
\[
c_{l-1}a_{n},c_{l-2}a_{n+1},c_{l-3}a_{n+2},\ldots,c_{l-j}a_{n+j-1}%
,\ldots,c_{l-w}a_{n+w-1}.
\]

It is enough to know how to construct the first arc between each pair of
bridges, and the rest of the arcs are "\textit{parallel}" arcs to them, see
Fig. \ref{fig4}%

\begin{figure}
[ht]
\begin{center}
\includegraphics[
height=1.0957in,
width=4.6544in
]%
{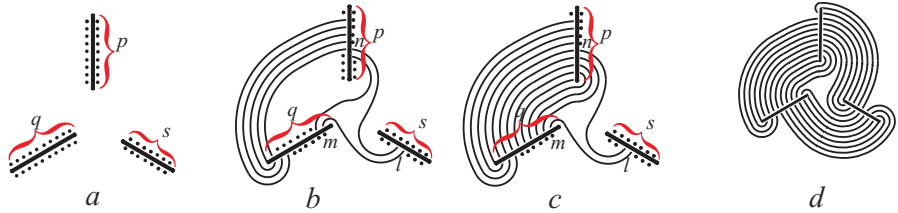}%
\caption{Drawing a canonical 3-bridge diagram.}%
\label{fig4}%
\end{center}
\end{figure}

In the rest of this paper we will refer to the diagram described as the link
canonical diagram associated to the Schubert form $\left(  p/n,q/m,s/l\right)
$. Notice that if $\left(  p/n,q/m,s/l\right)  $ does not satisfy the
conditions in Theorem \ref{teoclassi}, we still may use this algorithm to draw
a link diagram.

\begin{lemma}
If the Schubert form $\left(  p/n,q/m,s/l\right)  $ is reduced, the $3$-bridge
diagram has $p+q+s-3$ crossings.
\end{lemma}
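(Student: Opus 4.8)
The plan is to account for every crossing of the canonical diagram butterfly by butterfly, using the reflection $\gamma$ to recover the underarc segments hidden inside each butterfly. The starting observation is structural: in a $3$-bridge diagram the bridges $a,b,c$ are the overarcs and every crossing is an undercrossing beneath one of them. The external arcs listed in Section \ref{diagram} --- the $t$ arcs between $a$ and $b$, the $v$ arcs between $b$ and $c$, and the $w$ arcs between $c$ and $a$ --- are the parts of the underarcs lying \emph{outside} the butterflies. Since they form the nested parallel families of Fig. \ref{fig4} and, by the type I polygonalization of $S^{2}$ in Fig. \ref{fig2}, each family sits in one of the three faces meeting at $0$ and $\ast$, they can be realized with no mutual crossings and without passing beneath any bridge. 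Hence it suffices to count, for each butterfly, the underarc segments crossing beneath its bridge.

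First I would count the crossings beneath bridge $a$. Inside $P$ the forgotten underarc segments are recovered by $\gamma$, the reflection across the bridge axis; in the coordinates of the canonical diagram this axis joins the extreme points $a_{0}$ and $a_{p}$, so $\gamma$ fixes $a_{0}$ and $a_{p}$ and sends $a_{i}$ to $a_{2p-i}$. Thus the $2p-2$ non-extreme vertices split into the $p-1$ pairs $\{a_{i},a_{2p-i}\}$ with $1\le i\le p-1$; each pair lies on opposite wings and is joined by a single segment running beneath the bridge, contributing exactly one crossing, while the two fixed endpoints contribute none. Therefore bridge $a$ carries $p-1$ crossings. The identical argument on $Q$ and $S$ gives $q-1$ and $s-1$ crossings, and summing yields $(p-1)+(q-1)+(s-1)=p+q+s-3$.

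Finally I would invoke the reduced hypothesis to make the tally exact. The pairing argument above produces $p+q+s-3$ internal segments for \emph{any} admissible integers, but for the statement to describe an honest $3$-bridge diagram one must exclude the degenerate configurations whose associated diagram has trivial components, such as $\left\{ 5,1,5,2,5,1\right\} $ in Fig. \ref{fig3}. Reducedness, as built into the Definition, is exactly the requirement that $\left( p/n,q/m,s/l\right) $ give a $3$-bridge diagram with no trivial components; under it no crossing arises outside the three butterflies and each counted segment is a genuine crossing of a genuine $3$-bridge diagram, so the total is precisely $p+q+s-3$.

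I expect the main obstacle to be the first paragraph: rigorously justifying that the external families can be drawn with no crossings among themselves and beneath no bridge, and --- relatedly --- pinning down precisely how the failure of reducedness alters the count. The former should follow from placing each parallel family in a single face of the $S^{2}$ polygonalization of Fig. \ref{fig2}, so that arcs from distinct families occupy distinct sectors; the latter will require tracing how a trivial component splits off in the non-reduced case and checking which of the $p+q+s-3$ segments it absorbs.
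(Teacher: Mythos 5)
Your proposal is correct, and in fact the paper offers no proof of this lemma at all --- it is stated bare, as an immediate consequence of the canonical diagram construction of Section \ref{diagram}. Your argument is exactly the intended one: the external arcs (the three parallel families of $t$, $v$ and $w$ arcs) are embedded disjointly in the three sectors between consecutive butterflies, so all crossings occur inside the ellipses; inside $P$ the underarc segments are the chords joining $a_{i}$ to $\gamma(a_{i})=a_{2p-i}$ for $1\leq i\leq p-1$, which are mutually parallel (each perpendicular to the bridge axis) and each meets the bridge exactly once, giving $p-1$ crossings, and likewise $q-1$ and $s-1$ for $Q$ and $S$. One clarification on your third paragraph and closing worry: reducedness does not alter the crossing count. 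The pairing argument yields $p+q+s-3$ crossings for the canonical diagram of \emph{any} $3$-butterfly, reduced or not; in the non-reduced case (e.g.\ $\{5,1,5,2,5,1\}$ of Fig.\ \ref{fig3}a) the trivial split components still weave under the bridges and contribute their crossings to the same total. What fails without reducedness is only the hypothesis of the statement, namely that the resulting diagram qualifies as a $3$-bridge link diagram in the paper's sense (no split trivial components); so there is no need to trace ``which segments a trivial component absorbs,'' and your second paragraph already constitutes a complete proof.
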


\section{Permutations associated to a Schubert form \label{secpermu}}

The conditions for a 3-butterfly $\left\{  p,n,q,m,s,l\right\}  $ to be
reduced can not be described using simple conditions on the integers in a
similar way as the conditions to be a 3-butterfly given in (\ref{resteo}). Now
we need to go deeper and study in detail the permutations $\phi$ and $\gamma$.
Given a set $\left\{  p,n,q,m,s,l\right\}  $ that satisfies (\ref{resteo}) we
construct explicitly the associated $3$-butterfly and then we draw the
3-bridge diagram.

Define the 3-butterfly by labelling the vertices of each of the butterflies:
$P$ have vertices labeled by $A=\{a_{0},\cdots a_{i},\cdots,a_{2p-1}%
\},\ i\in\mathbb{Z}_{2p};$ $Q$ with vertices $B=\{b_{0},\cdots,b_{j}%
,\cdots,b_{2q-1}\},j\in\mathbb{Z}_{2q};$ and $S$ with vertices $C=\{c_{0}%
,\cdots,c_{l},\cdots,c_{2s-1}\},$ $l\in\mathbb{Z}_{2s}.$ The bridge ends are
labeled by $a_{0}$ and $a_{p}$ in $P$ (resp. by $b_{0}$ and $b_{q}$ in $Q$ and
$c_{0}$ and $c_{s}$ in $S$). See Fig. \ref{fig5}.%

\begin{figure}
[ptb]
\begin{center}
\includegraphics[
height=2.1421in,
width=4.6406in
]%
{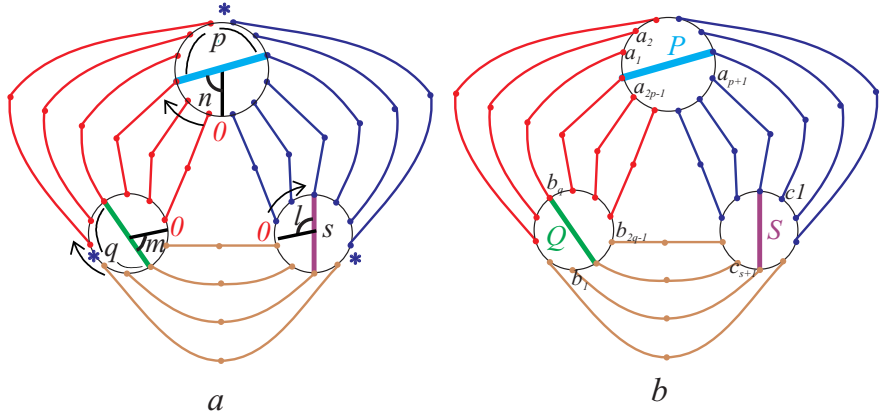}%
\caption{Orientation and labels of a $3$-butterfly.}%
\label{fig5}%
\end{center}
\end{figure}

We have the permutations $\gamma$ and $\phi\ $on the set $A\cup B\cup C$. The
permutation $\gamma$ is the reflection along the bridges. The permutation
$\phi$ is determined by the identification of the vertices of two butterflies,
so in the $3$-butterfly each vertex has two labels. The proofs of the
following lemmas are straightforward computations.

\begin{lemma}
\label{lemmagama}The function defined in the set $A\cup B\cup C$ by
\begin{equation}%
\begin{array}
[c]{ccc}%
\gamma\left(  a_{i}\right)  =a_{2p-i}, &  & 0\leq i<2p,\ \ \\
\gamma\left(  b_{j}\right)  =b_{2q-j}, &  & 0\leq j<2q,\ \ \\
\gamma\left(  c_{h}\right)  =b_{2s-h}, &  & 0\leq h<2s\ \
\end{array}
\label{permugama}%
\end{equation}
is an order 2 permutation. The set of fixed points is%
\begin{equation}
E=\left\{  a_{0},a_{p,}b_{0},b_{q},c_{0},c_{s}\right\}  \text{.}
\label{extrem}%
\end{equation}

\end{lemma}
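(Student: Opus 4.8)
The plan is to verify directly from the defining formulas in (\ref{permugama}) that $\gamma$ is a well-defined involution and to identify its fixed-point set. First I would check that $\gamma$ maps each of the three index blocks $A$, $B$, $C$ to itself: the formula $\gamma(a_i)=a_{2p-i}$ sends a subscript in $\mathbb{Z}_{2p}$ to another subscript in $\mathbb{Z}_{2p}$ (interpreting $2p-i \bmod 2p$), and similarly for $B$ and $C$, so the three cases do not interfere and it suffices to treat one block at a time.

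Next I would confirm that $\gamma$ has order $2$ by composing it with itself on each block. For the $A$-block, $\gamma(\gamma(a_i)) = \gamma(a_{2p-i}) = a_{2p-(2p-i)} = a_i$, where the last step uses arithmetic mod $2p$; the same one-line computation works verbatim for $b_j$ mod $2q$ and for $c_h$ mod $2s$. This shows $\gamma^2 = \mathrm{id}$, so $\gamma$ is a bijection equal to its own inverse, hence a permutation of order at most $2$; since it is clearly not the identity (e.g.\ it moves $a_1$), its order is exactly $2$.

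Finally I would compute the fixed points. On the $A$-block, $\gamma(a_i)=a_i$ means $2p-i \equiv i \pmod{2p}$, i.e.\ $2i \equiv 0 \pmod{2p}$, i.e.\ $i \equiv 0 \pmod{p}$, whose solutions in $\mathbb{Z}_{2p}$ are $i=0$ and $i=p$, giving the fixed points $a_0, a_p$. Repeating this for $B$ yields $b_0, b_q$ and for $C$ yields $c_0, c_s$, so the fixed set is exactly the set $E$ in (\ref{extrem}), which is reassuringly the set of bridge endpoints described in the labelling convention.

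The computation is entirely routine, as the lemma statement itself anticipates; the only point requiring a little care is the mod-arithmetic bookkeeping, in particular remembering that $2i\equiv 0\pmod{2p}$ has the two solutions $i=0,p$ rather than just $i=0$, since the modulus is $2p$ and not $p$. I would also note a typographical issue to guard against: the stated formula writes $\gamma(c_h)=b_{2s-h}$, but for $\gamma$ to preserve the $C$-block (and to be an involution as claimed) this must read $\gamma(c_h)=c_{2s-h}$; I would treat the $b$ as a typo for $c$ and prove the corrected statement.
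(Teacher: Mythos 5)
Your proof is correct and matches the paper's intent exactly: the paper gives no explicit argument, stating only that "the proofs of the following lemmas are straightforward computations," and your routine block-by-block verification (including solving $2i\equiv 0 \pmod{2p}$ to get the two fixed points $a_0,a_p$) is precisely that computation. Your observation that $\gamma\left(c_h\right)=b_{2s-h}$ must be a typo for $\gamma\left(c_h\right)=c_{2s-h}$ is also correct, since otherwise $\gamma$ would neither preserve the blocks nor be an involution, and the reflection interpretation in Section \ref{secdescrip} confirms the intended formula.
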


The set $E=\left\{  a_{0},a_{p,}b_{0},b_{q},c_{0},c_{s}\right\}  $ corresponds
to the endpoints of the bridges. It will play an important role in the rest of
the paper.

\begin{lemma}
\label{lemmafi}The map $\phi:$ $A\cup B\cup C\rightarrow A\cup B\cup C$
defined by
\begin{equation}%
\begin{array}
[c]{llll}%
a_{n-i} & \longleftrightarrow & b_{m+i-1,} & \text{ if }1\leq i\leq t,\\
a_{n+j} & \longleftrightarrow & c_{l-j-1,} & \text{ if }0\leq j\leq w-1,\\
b_{m-h} & \longleftrightarrow & c_{l+h-1,} & \text{ if }1\leq h\leq v,
\end{array}
\label{permufi}%
\end{equation}
is an order 2 permutation, where $t=p+q-s,$ $v=q+s-p$ and $w=p+s-q$.
\end{lemma}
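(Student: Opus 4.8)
The plan is to show that $\phi$ is well-defined on the finite set $A\cup B\cup C$; once that is done, the involution property is automatic, since each clause of (\ref{permufi}) is written as a two-sided correspondence $\longleftrightarrow$, so by construction $\phi$ carries each element to its partner and back. Concretely, I would prove that the three families of correspondences partition $A\cup B\cup C$ into disjoint unordered pairs, each joining vertices of two different butterflies. This exhibits $\phi$ as a perfect matching, hence a fixed-point-free permutation with $\phi^{2}=\mathrm{id}$.

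The heart of the argument is a residue-counting check carried out on each butterfly separately, using the relations $t+w=2p$, $t+v=2q$, $w+v=2s$ from (\ref{internum}). For the $a$-labels, the first clause uses the indices $n-1,n-2,\dots,n-t$ (as $i$ ranges over $1,\dots,t$) and the second clause uses $n,n+1,\dots,n+w-1$ (as $j$ ranges over $0,\dots,w-1$). Together these form the $t+w$ consecutive integers from $n-t$ to $n+w-1$; since $t+w=2p$, they run through a complete residue system modulo $2p$ exactly once, so every vertex $a_{0},\dots,a_{2p-1}$ appears in precisely one clause and receives a single image. I would then repeat the identical computation for the $b$-labels, where the first and third clauses contribute the blocks $m,\dots,m+t-1$ and $m-v,\dots,m-1$, a run of $t+v=2q$ consecutive residues modulo $2q$; and for the $c$-labels, where the second and third clauses contribute $l-w,\dots,l-1$ and $l,\dots,l+v-1$, a run of $w+v=2s$ consecutive residues modulo $2s$. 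Each of the three index identities is exactly one of the relations in (\ref{internum}).

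Having verified that each of $A$, $B$, $C$ is exhausted without repetition, I conclude that every element of $A\cup B\cup C$ lies in exactly one correspondence and is paired with a single distinct vertex of another butterfly; thus $\phi$ is a well-defined bijection of order $2$ with no fixed points, and the pair count $t+w+v=p+q+s$ equals half of $\left\vert A\cup B\cup C\right\vert =2(p+q+s)$, confirming the matching is perfect. The only delicate point is the bookkeeping of the ranges modulo $2p$, $2q$, $2s$—in particular checking that the two blocks meeting at index $n$ (respectively $m$, $l$) abut without gap or overlap, which is precisely the contiguity of $\{n-t,\dots,n-1\}$ and $\{n,\dots,n+w-1\}$. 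This is the step I would write out most explicitly, since it is where an off-by-one error would hide.
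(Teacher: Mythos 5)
Your proof is correct, and it takes the only route available: the paper gives no argument for this lemma beyond the remark that ``the proofs of the following lemmas are straightforward computations,'' and your residue-range bookkeeping --- the blocks $n-t,\dots,n+w-1$ covering $\mathbb{Z}_{2p}$, $m-v,\dots,m+t-1$ covering $\mathbb{Z}_{2q}$, and $l-w,\dots,l+v-1$ covering $\mathbb{Z}_{2s}$, via $t+w=2p$, $t+v=2q$, $v+w=2s$ --- is precisely that computation written out in full. Since every pair joins vertices of two different butterflies, the matching is fixed-point free, so $\phi$ is indeed a permutation of order exactly $2$; nothing is missing.
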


Note that $\phi$ does not have fixed points, and among the bicycles in $\phi$
there is no a bicycle in the set
\begin{align}
\mathcal{F}  &  =\{\left(  a_{0},b_{0}\right)  ,\left(  a_{0},b_{q}\right)
,\left(  a_{0},c_{0}\right)  ,\left(  a_{0},c_{s}\right)  ,\left(  b_{0}%
,a_{p}\right)  ,\left(  b_{0},c_{0}\right)  ,\label{forbiden}\\
&  \left(  b_{0},c_{s}\right)  ,\left(  c_{0},a_{p}\right)  ,\left(
c_{0},b_{q}\right)  ,\left(  a_{p},b_{q}\right)  ,\left(  a_{p},c_{s}\right)
,\left(  b_{q},c_{s}\right)  \}\nonumber
\end{align}

The construction of $\phi$ is well defined for any polygonalization of $S^{2}$
with $3$ polygons, even if they do not satisfy the conditions of Theorem
\ref{teoclassi}. In fact, in terms of the permutation $\phi$, we can rewrite
Theorem \ref{teoclassi} as follows.

\begin{theorem}
A set $\left\{  p,n,q,m,s,l\right\}  $, with $p\geq q\geq s\geq2,\ 1\leq n\leq
p,\ 1\leq m\leq q,\ 1\leq l\leq s\ $describes a 3-butterfly if and only if the
associated permutation $\phi$ does not have any of the bicycles in the set
$\mathcal{F}$.
\end{theorem}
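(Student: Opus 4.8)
The plan is to show that the two characterizations of a 3-butterfly—the explicit inequalities of Theorem \ref{teoclassi} and the condition that $\phi$ avoids all bicycles in $\mathcal{F}$—are logically equivalent, given the baseline hypotheses $p\geq q\geq s\geq 2$ and $1\leq n\leq p$, $1\leq m\leq q$, $1\leq l\leq s$. Since the baseline hypotheses are assumed on both sides, the real content is the equivalence between the list of ``$\not=$'' conditions in (\ref{resteo}) (together with $p+1\leq s+q$) and the non-appearance of the twelve forbidden pairs in $\mathcal{F}$. My approach is a direct translation: each forbidden pair $(x,y)\in\mathcal{F}$ is, by the definition of $\phi$ in (\ref{permufi}), an assertion that two specific bridge-endpoints get identified, and this happens exactly when a certain linear equation in $\{p,n,q,m,s,l\}$ holds. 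So the strategy is to run through the twelve pairs, solve for the index condition forcing $\phi$ to realize that pair, and check that the disjunction of those index conditions reproduces precisely the inequalities listed in (\ref{resteo}).

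The key steps, in order, are as follows. First, I would record the three defining ``blocks'' of $\phi$ from (\ref{permufi}) and note the ranges of $i,j,h$; these ranges are crucial because a forbidden identification can only be \emph{triggered} when the relevant index falls in its admissible range, which is exactly where the conditional hypotheses (``if $m>q+s-p$\ldots'', ``if $l<p-s$\ldots'', ``if $m\leq q+s-p$\ldots'') come from. Second, for each pair in $\mathcal{F}$ I would compute the index value that would produce it: for instance, $\phi(a_n)=b_{m-1}$ under the first block (setting $i$ so that $a_{n-i}$ or $b_{m+i-1}$ hits a bridge-end), and I would translate the requirement ``this index lies in range'' into an inequality and ``the identification equals a bridge-end'' into an equation mod $2p$, $2q$, or $2s$. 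Third, I would collect these equations and observe that, after using $t=p+q-s$, $v=q+s-p$, $w=p+s-q$ from (\ref{internum}) and reducing the modular arithmetic, they become exactly the conditions $n+m\not=q+1$, $n+l\not=p+1$, $n+m\not=2q+1$, $n+m\not=2q-p+1$, $n+l\not=p-s+1$, and $m+l\not=s+1$, each guarded by the appropriate range condition. Fourth, I would verify the reciprocal direction is automatic, since the translation is an equivalence at each pair: forbidding the bicycle is logically the same as negating the corresponding equation.

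The main obstacle I anticipate is the bookkeeping of the modular reductions and the matching of range conditions to the conditional clauses. The subtlety is that several pairs in $\mathcal{F}$ share the same endpoints but correspond to different blocks of $\phi$ (e.g. $a_p$ can be reached through the first block as $a_{n-i}$ with $i=n-p$, but only if $n-p$ lands in $1\leq i\leq t$), so I must be careful to attribute each forbidden identification to the correct block and hence to the correct range guard. A related delicate point is that the endpoints $a_0,a_p,b_0,b_q,c_0,c_s$ are exactly the fixed points of $\gamma$ from (\ref{extrem}), and a pair in $\mathcal{F}$ forces two such endpoints to be $\phi$-identified; I expect the cleanest way to organize the argument is to ask, for each endpoint, which block of $\phi$ could map it to another endpoint, thereby enumerating systematically rather than ad hoc. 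Once the block/range attribution is pinned down, the remaining computations are routine linear algebra mod $2p$, $2q$, $2s$, and the equivalence follows line by line.
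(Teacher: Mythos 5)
Your proposal takes essentially the same route as the paper: the paper gives no separate argument for this theorem, presenting it as a direct rewriting of Theorem \ref{teoclassi}, which is exactly the pair-by-pair translation you outline between the forbidden bicycles of $\phi$ and the arithmetic conditions of (\ref{resteo}), with the index-range guards of (\ref{permufi}) accounting for the conditional clauses. One small caveat: your illustrative computation ``$\phi(a_n)=b_{m-1}$ under the first block'' is off (by Lemma \ref{lemmafi}, $a_n$ is paired with $c_{l-1}$ via the second block, and neither vertex is a bridge endpoint), but this slip is incidental and does not affect the soundness of the plan.
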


We study the cyclic decomposition of $\mu=\phi\gamma$. The orbit of a vertex
$v$ will be denoted by $\mathcal{O}_{\mu}\left(  v\right)  $. For a cycle
$\tau=\left(  z_{1}\ z_{2}\ \cdots\ z_{k}\right)  $ we will use the same
symbol to refer to the cycle, to its orbit $\left\{  z_{1},z_{2},\cdots
,z_{k}\right\}  $ and to the word $z_{1}z_{2}\cdots z_{k}$. The length of
$\tau$ will be denoted by $|\tau|,\tau\left(  x\right)  $ will denote the
cycle that contains $x$ and for a function $\Gamma,$ $\Gamma\left(
\tau\right)  $ will be the word (set) formed by applying $\Gamma$ to each
element in $\tau$.

\begin{theorem}
\label{teored}Let $\left(  p,n,q,m,s,l\right)  $ be a 3-butterfly, $\gamma$
and $\phi$ be its associated permutations, given in Lemmas  \ref{lemmagama}
and \ref{lemmafi} and let $\mu=\phi\gamma$. $\left(  p/n,q/m,s/l\right)  $ is
a Schubert form for a 3-bridge link if and only if $\mu$ is the product of
three disjoint cycles,$\ \mu=\tau_{1}\tau_{2}\tau_{3}$ such that, for
\thinspace i=1,2,3, $|\tau_{i}\cap E|=2$, where $E$ is given in (\ref{extrem}).
\end{theorem}

\begin{proof}
Let $\mu=\phi\gamma$ associated to the 3-butterfly $\left(
p,n,q,m,s,l\right)  $. The 3-butterfly$\ \left(  p,n,q,m,s,l\right)  $ defines
a Schubert form $\left(  p/n,q/m,s/l\right)  $ if and only if the 3-butterfly
is reduced.

Suppose that the butterfly is reduced. The orbit of $a_{p}\ $under
$\mu,\ \mathcal{O}_{\mu}\left(  a_{p}\right)  $ will describe a path that
follows the underarc with initial point in $a_{p}$, so eventually it will
arrive to the endpoint of the underarc, say $e=\mu^{k}\left(  a_{p}\right)
$,\ $e\in$ $E,$ $E\ $defined in (\ref{extrem}). Then $\mu\left(  e\right)
=\phi\gamma\left(  e\right)  =\phi\left(  e\right)  =\phi\phi\gamma\mu
^{k-1}\left(  a_{p}\right)  =\gamma\mu^{k-1}\left(  a_{p}\right)  $, so the
orbit will go back to the same underarc, in opposite direction. We take
$\tau_{1}$ as the cycle formed by the orbit of $a_{p}$ and $\tau_{1}\cap E$
will contain exactly two vertices. We repeat the same process with the other
vertices in $E$. Since the butterfly is reduced, all the vertices will be
crossed by one of the underarcs, so we have only three orbits.

Reciprocally, if the butterfly is not reduced, there will be a component whose
vertex will not be in the orbit of any of the elements in $E$. See Fig.
\ref{fig3}a.
\end{proof}

From now on we will assume that the permutation $\mu$ associated to the
Schubert form $\left(  p/n,q/m,s/l\right)  $ is the product of three disjoint
cycles, $\mu=\tau_{1}\tau_{2}\tau_{3}$. The cyclic decomposition of $\mu$
allows us to determine the number of components of the associated link diagram.

\begin{theorem}
[Classification]\label{teoclaslink}Let $\left(  p/n,q/m,s/l\right)  $ be a
Schubert form, $\gamma$ and $\phi$ its associated permutations given in
\ref{lemmagama} and \ref{lemmafi}, $\mu=\phi\gamma$.\ The $3$-bridge link
diagram $L$ represented by $\left(  p/n,q/m,s/l\right)  $ satisfies:

(i) $L$ is a knot if and only if $a_{p}\notin\mathcal{O}_{\mu}\left(
a_{0}\right)  ,b_{q}\notin\mathcal{O}_{\mu}\left(  b_{0}\right)  $ and
$c_{s}\notin\mathcal{O}_{\mu}\left(  c_{0}\right)  $.

(ii) $L$ is a two component link if and only if one, and only one, of the
following conditions holds: $a_{p}\in\mathcal{O}_{\mu}\left(  a_{0}\right)
,b_{q}\in\mathcal{O}_{\mu}\left(  b_{0}\right)  $ or $c_{s}\in\mathcal{O}%
_{\mu}\left(  c_{0}\right)  $.

(iii) $L$ is a three component link if and only if $a_{p}\in\mathcal{O}_{\mu
}\left(  a_{0}\right)  ,b_{q}\in\mathcal{O}_{\mu}\left(  b_{0}\right)  $ and
$c_{s}\in\mathcal{O}_{\mu}\left(  c_{0}\right)  $.
\end{theorem}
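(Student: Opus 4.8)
The plan is to reinterpret the three cycles of $\mu$ as the three underarcs of the diagram and to reduce the counting of link components to a purely combinatorial statement about two perfect matchings on the six-element set $E=\{a_0,a_p,b_0,b_q,c_0,c_s\}$. By Theorem \ref{teored} I may write $\mu=\tau_1\tau_2\tau_3$ with $|\tau_i\cap E|=2$ for each $i$; as the proof of that theorem shows, each $\tau_i$ traces a single underarc, and the two elements of $\tau_i\cap E$ are precisely the bridge-endpoints at which that underarc begins and ends. Thus the pairs $U=\{\tau_1\cap E,\ \tau_2\cap E,\ \tau_3\cap E\}$ form a perfect matching on $E$ (the \emph{underarc matching}), alongside the fixed \emph{bridge matching} $M=\{\{a_0,a_p\},\{b_0,b_q\},\{c_0,c_s\}\}$. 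Traversing the link amounts to alternately following an underarc (an edge of $U$) and crossing a bridge (an edge of $M$), so the link components are in bijection with the cycles of the $2$-regular multigraph $M\cup U$ on $E$.

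Next I would record the translation of the hypotheses. Since $a_0$ and $a_p$ lie in a common $\mu$-cycle exactly when they are the two endpoints of one underarc, we have $a_p\in\mathcal{O}_\mu(a_0)$ iff $\{a_0,a_p\}\in U$, and likewise for $\{b_0,b_q\}$ and $\{c_0,c_s\}$. Hence each of the three listed conditions asserts that a given bridge pair is simultaneously an edge of $M$ and of $U$; when this happens, that bridge together with its underarc closes up into a bigon, a component all by itself. Everything therefore reduces to counting how many of the three bridge pairs belong to $U$; call this number $k$.

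The key combinatorial fact, and the one I expect to carry the argument, is that $k\in\{0,1,3\}$ and never $k=2$. Indeed, if two bridge pairs, say $\{a_0,a_p\}$ and $\{b_0,b_q\}$, both lie in $U$, then the only vertices left for the third edge of $U$ are $c_0,c_s$, forcing $\{c_0,c_s\}\in U$ and $k=3$; more generally, a matching avoiding all three bridge pairs must, by the balance count on the three colour classes $\{a_0,a_p\},\{b_0,b_q\},\{c_0,c_s\}$, contain exactly one edge between each pair of classes. With this in hand I would finish by computing the cycles of $M\cup U$ in each case: for $k=0$ the unique cross-class $U$ produces a single $6$-cycle, so $L$ is a knot, giving (i); for $k=1$ one bridge yields a bigon while the remaining two bridges with their two cross-class underarcs form a single $4$-cycle, so $L$ has two components, giving (ii); and for $k=3$ each bridge forms its own bigon, giving three components and (iii). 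The equivalences then follow because $k=0,1,3$ exhaust all possibilities and correspond bijectively to the three displayed sets of conditions.

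The main obstacle is the geometric bookkeeping underlying the first paragraph: one must be sure that following $\mu$ genuinely traverses an underarc from one bridge-endpoint to another (so that $U$ is well defined and $\tau_i\cap E$ is exactly a pair of endpoints), and that passing from the end of one underarc to the start of the next is effected precisely by the matching edge of $M$. Both facts are already contained in the proof of Theorem \ref{teored}, where the identity $\mu(e)=\gamma\mu^{k-1}(a_p)$ shows the orbit turns around exactly at a bridge-endpoint $e\in E$; once this is granted, the remainder is the elementary matching count above.
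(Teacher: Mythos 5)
Your proposal is correct and follows essentially the same route as the paper: the paper's proof is a one-line instruction to take the decomposition $\mu=\tau_{1}\tau_{2}\tau_{3}$ and study each cycle via the underarc interpretation from the proof of Theorem \ref{teored}, which is precisely what you do. Your formalization via the two matchings on $E$ (components of $L$ corresponding to cycles of $M\cup U$, with the explicit exclusion of the case $k=2$) is a clean, complete write-up of the case analysis that the paper leaves implicit.
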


\begin{proof}
Take the cyclic decomposition of$\ \mu=\tau_{1}\tau_{2}\tau_{3}$ and study
each one of the cycles, using the interpretation given in the proof of Theorem
\ref{teored}.
\end{proof}

\section{Orientation of the canonical 3-bridge link diagram $\left(
p/n,q/m,s/l\right)  $\label{secorien}}

Until now we have not considered the orientation of the link $L$, but in order
to find a group presentation for the link group $\pi\left(  L\right)  $ we
will give an orientation to the canonical diagram described in Section
\ref{diagram}. Let $\mu=\phi\gamma=\tau_{1}\tau_{2}\tau_{3}$, we study in
detail these cycles. In each cycle $\tau_{i}$ we have two special vertices,
that are the fixed points of $\gamma$ and form the set $E$ defined in
(\ref{extrem}). Each cycle describes a path around one of the link diagram
underarcs, see Fig. \ref{fig6}.\textit{b}, so one of this special vertices
corresponds to the arc initial point, denoted $I_{i}$; and the other one to
the arc endpoint, denoted $F_{i}$. So we consider that when we follow the
link, we travel it in the order $\tau_{1},\tau_{2}$ and $\tau_{3}$ and the
bridge $a$ \ in the direction from $a_{0}$ to $a_{p}$.

\begin{definition}
We define $\delta_{a}$ (resp. $\delta_{b},\delta_{c}$), the direction in which
we travel the bridge$\ a$ (resp. $b,c$) as: $\delta_{a}=1$ and
\[%
\begin{array}
[c]{ccc}%
\delta_{b}=\left\{
\begin{array}
[c]{cc}%
1, & \text{if we go from }b_{0}\ \text{to }b_{q}\text{\ }\\
-1, & \text{if we go from }b_{q}\ \text{to }b_{0}%
\end{array}
\right.  , & \  & \delta_{c}=\left\{
\begin{array}
[c]{cc}%
1, & \text{if we go from }c_{0}\ \text{to }c_{s}\text{\ }\\
-1, & \text{if we go from }c_{s}\ \text{to }c_{0}%
\end{array}
\right.
\end{array}
\]

\end{definition}

When the condition (i) in Theorem \ref{teoclaslink} is satisfied, the Schubert
form corresponds to a knot diagram, hence the orientation of bridge $a$ is
enough to determine the knot orientation. We take $\tau_{1}\ $as the cycle
that contains $a_{p}$ and $\tau_{3}$ as the cycle that contains $a_{0}$. In
the link case we need to determine the orientation of each component. If $L$
is a 3-component link, the condition (iii) in Theorem \ref{teoclaslink} is
satisfied and we orient each component by $\delta_{a}=\delta_{b}=\delta_{c}%
=1$, $\tau_{1}$ contains $a_{p},\ \tau_{2}$ contains $b_{q}$ and $\ \tau_{3}$
contains $c_{s}$. If $L$ is a 2-component link the condition (ii) in Theorem
\ref{teoclaslink} holds, again we take $\tau_{1}\ $as the cycle that contains
$a_{p}$, and $\tau_{3}$ the cycle that corresponds to the other component.\ 

\begin{lemma}
a. If $L\ $is a knot, Table 1 contains all possibilities for the endpoints of
the cycles $\tau_{1},\tau_{2},\tau_{3}$ and the knot orientation.

b. If $L\ $is a 2-component link, Table 2 contains all possibilities for the
endpoints of the cycles $\tau_{1},\tau_{2},\tau_{3}$ and the link
orientation.
\[%
\begin{array}
[c]{cc}%
\begin{tabular}
[c]{|l|l|l|l|l|l|l|l|}\hline
$I_{1}$ & $F_{1}$ & $I_{2}$ & $F_{2}$ & $I_{3}$ & $F_{3}$ & $\delta_{b}$ &
$\delta_{c}$\\\hline\hline
$a_{p}$ & $b_{0}$ & $b_{q}$ & $c_{0}$ & $c_{s}$ & $a_{0}$ & $1$ & $1$\\\hline
$a_{p}$ & $b_{0}$ & $b_{q}$ & $c_{s}$ & $c_{0}$ & $a_{0}$ & $1$ & $-1$\\\hline
$a_{p}$ & $b_{q}$ & $b_{0}$ & $c_{0}$ & $c_{s}$ & $a_{0}$ & $-1$ & $1$\\\hline
$a_{p}$ & $b_{q}$ & $b_{0}$ & $c_{s}$ & $c_{0}$ & $a_{0}$ & $-1$ &
$-1$\\\hline
$a_{p}$ & $c_{0}$ & $c_{s}$ & $b_{0}$ & $b_{q}$ & $a_{0}$ & $1$ & $1$\\\hline
$a_{p}$ & $c_{0}$ & $c_{s}$ & $b_{q}$ & $b_{0}$ & $a_{0}$ & $-1$ & $1$\\\hline
$a_{p}$ & $c_{s}$ & $c_{0}$ & $b_{0}$ & $b_{q}$ & $a_{0}$ & $1$ & $-1$\\\hline
$a_{p}$ & $c_{s}$ & $c_{0}$ & $b_{q}$ & $b_{0}$ & $a_{0}$ & $-1$ &
$-1$\\\hline
\end{tabular}
&
\begin{tabular}
[c]{|l|l|l|l|l|l|l|l|}\hline
$I_{1}$ & $F_{1}$ & $I_{2}$ & $F_{2}$ & $I_{3}$ & $F_{3}$ & $\delta_{b}$ &
$\delta_{c}$\\\hline\hline
$a_{p}$ & $b_{0}$ & $b_{q}$ & $a_{0}$ & $c_{s}$ & $c_{0}$ & $1$ & $1$\\\hline
$a_{p}$ & $b_{q}$ & $b_{0}$ & $a_{0}$ & $c_{s}$ & $c_{0}$ & $-1$ & $1$\\\hline
$a_{p}$ & $c_{0}$ & $c_{s}$ & $a_{0}$ & $b_{q}$ & $b_{0}$ & $1$ & $1$\\\hline
$a_{p}$ & $c_{s}$ & $c_{0}$ & $a_{0}$ & $b_{q}$ & $b_{0}$ & $1$ & $-1$\\\hline
$a_{p}$ & $a_{0}$ & $b_{q}$ & $c_{0}$ & $c_{s}$ & $b_{0}$ & $1$ & $1$\\\hline
$a_{p}$ & $a_{0}$ & $b_{q}$ & $c_{s}$ & $c_{0}$ & $b_{0}$ & $1$ & $-1$\\\hline
\end{tabular}
\\
\text{ Table 1\ } & \text{Table 2}%
\end{array}
\]

\end{lemma}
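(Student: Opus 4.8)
The plan is to reduce the statement to a finite combinatorial enumeration of the ways the six vertices of $E=\{a_0,a_p,b_0,b_q,c_0,c_s\}$ can be assigned to the slots $I_1,F_1,I_2,F_2,I_3,F_3$. Two structural facts drive everything. First, by Theorem \ref{teored} each cycle $\tau_i$ meets $E$ in exactly two vertices, and by the discussion opening Section \ref{secorien} these two vertices are precisely the initial point $I_i$ and the final point $F_i$ of the underarc $\tau_i$. Second, along any component the underarcs alternate with the bridges, and a bridge joining the end $F_i$ of one underarc to the start $I_{i+1}$ of the next must connect the two labels of a single bridge, so $\{F_i,I_{i+1}\}$ is one of the pairs $\{a_0,a_p\}$, $\{b_0,b_q\}$, $\{c_0,c_s\}$. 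I would also record the dictionary between labels and orientation signs: since bridge $a$ runs $a_0\to a_p$, the value $\delta_b=1$ occurs exactly when the bridge $b$ realizing some $\{F_i,I_{i+1}\}$ has $F_i=b_0$ and $I_{i+1}=b_q$, and likewise for $\delta_c$; thus each sign is a function of the label assignment rather than an independent parameter.

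First I would fix the entries forced by the conventions of Section \ref{secorien}. As bridge $a$ exits at $a_p$ and $\tau_1$ is chosen to contain $a_p$, the vertex $a_p$ is the start of the underarc following bridge $a$, so $I_1=a_p$ in every row of both tables. For part (a), Theorem \ref{teoclaslink}(i) says that in the knot case no bridge has both ends in one cycle; hence $a_0$ and $a_p$ lie in different cycles, and with $\tau_3\ni a_0$ and bridge $a$ entering at $a_0$ we get $F_3=a_0$. Bridge $a$ thus joins $\tau_3$ to $\tau_1$, leaving bridges $b$ and $c$ to realize the two pairs $\{F_1,I_2\}$ and $\{F_2,I_3\}$ from the four remaining vertices $\{b_0,b_q,c_0,c_s\}$. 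Enumerating the choices (which bridge fills $\{F_1,I_2\}$, and the internal order of each of the two pairs) gives $2\times 2\times 2=8$ assignments, and reading off $\delta_b,\delta_c$ in each case reproduces exactly the eight rows of Table 1, so every knot configuration is among them.

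For part (b), Theorem \ref{teoclaslink}(ii) guarantees that exactly one bridge $x\in\{a,b,c\}$ has both ends in a single cycle; that cycle is then a one-underarc component, while the other two cycles and two bridges form a two-underarc component. I would split into the subcases $x=a$, $x=b$, $x=c$. In each subcase the one-underarc component is oriented by taking its single bridge positive, which fixes both of its endpoints (for instance $\delta_c=1$ with $I_3=c_s$, $F_3=c_0$ when $x=c$), while the two-underarc component is oriented by $\delta_a=1$ when it contains bridge $a$ and otherwise by its lowest-lettered bridge; the pairing constraint of the first paragraph then leaves a single free bridge pair whose two internal orders give the $\pm 1$ of the remaining sign. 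Each subcase contributes two rows, matching rows $1$--$2$ for $x=c$, rows $3$--$4$ for $x=b$, and rows $5$--$6$ for $x=a$, six in all.

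The step I expect to be the main obstacle is the careful bookkeeping of the orientation conventions in part (b). I must check that, once $\tau_1$ is required to contain $a_p$ and each component is oriented so that its lowest-lettered bridge is positive, the ordering of $\tau_2$ and $\tau_3$ and the signs of the non-leading bridges are completely determined except for a single free sign per subcase, so that the six listed rows are simultaneously exhaustive and non-redundant. Translating each admissible label assignment into the precise $\delta_b,\delta_c$ entries via the sign dictionary of the first paragraph is then routine, but has to be verified subcase by subcase.
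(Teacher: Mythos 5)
Your proposal is correct and follows exactly the route the paper intends: the paper states this lemma without a written proof, leaving it as the immediate enumeration that follows from Theorem \ref{teored} (each $\tau_i$ meets $E$ in exactly the two points $I_i,F_i$), Theorem \ref{teoclaslink} (which bridge, if any, has both ends in one cycle), the bridge-pairing constraint $\{F_i,I_{i+1}\}\in\bigl\{\{a_0,a_p\},\{b_0,b_q\},\{c_0,c_s\}\bigr\}$, and the orientation conventions of Section \ref{secorien}. Your count of $2\times2\times2=8$ rows in the knot case and $3\times2=6$ rows in the two-component case, with the signs $\delta_b,\delta_c$ read off from the order within each bridge pair, is precisely that enumeration, so you have simply made explicit what the paper leaves implicit.
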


To avoid the lack of uniqueness in the cycles, we always write the cycle
$\tau_{i}$ as an ordered set with initial point $I_{i}$, but to simplify
notation we keep the cycle notation. In general this will not generate
confusion in our work.

\begin{lemma}
\label{lemcycle}Let $\mu$ be the permutation associated to the Schubert form
$(p/n,q/m,s/l)$, $\mu=\tau_{1}\tau_{2}\tau_{3}$, for i=1,2,3 we have:

(i) Each cycle $\tau_{i}$ is even, with order $\left\vert \tau_{i}\right\vert
\ $greater than 4.

(ii) $\tau_{i}=\left\{  I_{i},z_{1},\cdots,z_{k},F_{i},\gamma\left(
z_{k}\right)  ,\cdots,\gamma\left(  z_{1}\right)  \right\}  ,\ \ $for
$z_{j}\in A\cup B\cup C,\ \ j=1,\cdots,k,\ \ k\geq1.$

(iii) $\tau_{i}^{\left\vert \tau_{i}\right\vert /2}\ $contains the
transposition $\left(  I_{i},F_{i}\right)  $.
\end{lemma}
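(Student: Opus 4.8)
The plan is to exploit the structural fact established in the proof of Theorem \ref{teored}: each cycle $\tau_{i}$ of $\mu=\phi\gamma$ traces an underarc from its initial endpoint to its final endpoint and then returns along the reflected copy of the same vertices, with the two bridge endpoints $I_{i},F_{i}\in E$ being exactly the fixed points of $\gamma$ lying in $\tau_{i}$. The central computation I would reuse is the identity from that proof: if $e=\mu^{k}(I_{i})\in E$, then $\mu(e)=\phi\gamma(e)=\phi(e)=\gamma\mu^{k-1}(I_{i})$, which says the orbit, after reaching the second fixed point of $\gamma$, retraces $\gamma$ applied to the vertices already visited. This single relation is the engine behind all three parts.

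First I would prove part (ii), since (i) and (iii) follow from it. Writing $I_{i}$ for the initial endpoint, let $z_{1}=\mu(I_{i}),\ z_{2}=\mu^{2}(I_{i}),\dots$ and let $k$ be the least index with $\mu^{k+1}(I_{i})=F_{i}\in E$; such $k$ exists because $\tau_{i}\cap E=\{I_{i},F_{i}\}$ by Theorem \ref{teored} and the orbit must eventually hit the second fixed point of $\gamma$. Setting $z_{j}=\mu^{j}(I_{i})$ for $1\le j\le k$, the relation above applied at $e=F_{i}$ gives $\mu(F_{i})=\gamma(z_{k})$, and iterating $\mu(\gamma(z_{j}))=\gamma(z_{j-1})$ (again from $\mu\gamma=\phi$ acting on non-fixed points, so $\mu=\gamma\phi^{-1}=\gamma\phi$ reversed) shows the second half of the orbit is precisely $\gamma(z_{k}),\dots,\gamma(z_{1})$, and that $\mu(\gamma(z_{1}))=I_{i}$ closes the cycle. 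This yields exactly the claimed form
\[
\tau_{i}=\{I_{i},z_{1},\dots,z_{k},F_{i},\gamma(z_{k}),\dots,\gamma(z_{1})\}.
\]
The verification that the $z_{j}$ are distinct from their $\gamma$-images (so the length really is $2k+2$) uses that $\gamma$ fixes only the six points of $E$, none of which equals any interior $z_{j}$.

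Given (ii), part (i) is immediate: the displayed cycle has length $|\tau_{i}|=2(k+1)$, which is even, and since $k\ge1$ we get $|\tau_{i}|\ge4$; I would argue $k\ge1$ strictly, i.e. $|\tau_{i}|>4$, by ruling out $k=1$ using the hypothesis that the butterfly is reduced together with Lemma \ref{lemcycle}(ii)'s requirement that an interior vertex $z_1$ exist distinct from the endpoints (degenerate short cycles correspond to the forbidden bicycles of $\mathcal{F}$ in (\ref{forbiden}) or to trivial split components). For part (iii), I would simply compute the half-power: from the normal form in (ii), the permutation $\tau_{i}$ is a $2(k+1)$-cycle whose opposite entries are $I_{i}\leftrightarrow F_{i}$, $z_{j}\leftrightarrow\gamma(z_{j})$; raising a $2m$-cycle to the power $m$ sends each entry to the one diametrically opposite, so $\tau_{i}^{|\tau_{i}|/2}$ is the product of transpositions pairing each element with its antipode, and in particular contains $(I_{i},F_{i})$.

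The step I expect to be the main obstacle is the careful bookkeeping in part (ii): one must check that applying $\mu$ along the "return" half genuinely produces $\gamma(z_{k}),\dots,\gamma(z_{1})$ in that order and re-enters at $I_{i}$, which requires being precise about whether $\mu$ equals $\phi\gamma$ or $\gamma\phi$ at each stage and about the commutation $\mu\gamma=\gamma\mu^{-1}$ that the fixed-point relation secretly encodes. Everything else reduces to distinctness of labels and the parity count, which are routine once the symmetric normal form is in hand.
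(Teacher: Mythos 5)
Your proposal follows essentially the same route as the paper's proof: the engine is the identity $\mu\left(F_{i}\right)=\phi\gamma\left(F_{i}\right)=\phi\left(F_{i}\right)=\gamma\left(z_{k}\right)$ (valid because $F_{i}$ is $\gamma$-fixed and $\phi$ is an involution), iterated as $\mu\left(\gamma\left(z_{j}\right)\right)=\phi\left(z_{j}\right)=\gamma\left(z_{j-1}\right)$, which produces exactly the symmetric form in (ii); parts (i) and (iii) are then read off from that form, with $k\geq1$ coming from the fact that $\phi$ has no bicycle in the set $\mathcal{F}$ of (\ref{forbiden}), so $\mu\left(I_{i}\right)=\phi\left(I_{i}\right)\neq F_{i}$. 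This is precisely the paper's argument.

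The one step that fails is your claim, in part (i), that reducedness lets you rule out $k=1$ and conclude the strict inequality $\left\vert\tau_{i}\right\vert>4$. The forbidden set $\mathcal{F}$ consists only of pairs of bridge endpoints, i.e.\ pairs of elements of $E$; a cycle with $k=1$ has the form $\left(I_{i}\ z_{1}\ F_{i}\ \gamma\left(z_{1}\right)\right)$ with $z_{1}\notin E$, so it involves no bicycle of $\mathcal{F}$, and it is not a trivial split component either, since it contains two points of $E$. Geometrically, a length-4 cycle is an underarc with exactly one undercrossing, and these do occur in reduced Schubert forms: for $\left(2/1,2/1,2/1\right)$ --- the case $p=2$ of the paper's own family $\left(p/1,p/1,p/1\right)$, namely the standard 3-crossing diagram of the trefoil $T\left(3,2\right)$ --- all three cycles have length exactly 4 (each underarc passes under exactly one bridge). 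So the strict bound you propose is not provable; the paper's own proof only derives ``length greater than 3,'' which together with evenness gives $\left\vert\tau_{i}\right\vert\geq4$, and the statement's ``greater than 4'' must be read as ``at least 4.'' Apart from this point, which is a defect of the statement rather than a fixable gap in your argument, your proof is correct and coincides with the paper's.
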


\begin{proof}
By condition (\ref{forbiden}) we get $\mu\left(  I_{i}\right)  =\tau
_{i}\left(  I_{i}\right)  \neq F_{i}$, so the length of the cycle $\tau_{i}$
is greater than 3. Then, there exists $z_{j}\in A\cup B\cup C,$ $j=1,\cdots
,k\geq1$ such that $z_{1}=\mu\left(  I_{i}\right)  ,z_{2}=\mu\left(
z_{1}\right)  ,\cdots,z_{k}=\mu\left(  z_{k-1}\right)  $\ and $F_{i}%
=\mu\left(  z_{k}\right)  =\phi\gamma\left(  z_{k}\right)  ,$ this yields%
\[
\mu\left(  F_{i}\right)  =\phi\gamma\left(  F_{i}\right)  =\phi\left(
F_{i}\right)  =\phi\left(  \phi\gamma\left(  z_{k}\right)  \right)
=\gamma\left(  z_{k}\right)  .
\]
Now,%
\[
\mu\left(  \gamma\left(  z_{k}\right)  \right)  =\phi\gamma\left(
\gamma\left(  z_{k}\right)  \right)  =\phi\left(  z_{k}\right)  =\phi\left(
\phi\gamma\left(  z_{k-1}\right)  \right)  =\gamma\left(  z_{k-1}\right)  ,
\]
and then, for $j=k,\cdots,2,$ we get $\mu\left(  \gamma\left(  z_{j}\right)
\right)  =\phi\gamma\left(  \gamma\left(  z_{j}\right)  \right)  =\phi\left(
z_{j}\right)  =\phi\left(  \phi\gamma\left(  z_{j-1}\right)  \right)
=\gamma\left(  z_{j-1}\right)  .$
\end{proof}

The relevant information on each cycle $\tau_{i}$ is contained in the first
part of the cycle, we define the initial segment of $\tau_{i}$ as%
\begin{equation}
\widetilde{\tau_{i}}=\left\{  I_{i},z_{1},\cdots,z_{k}\right\}
\label{defonda}%
\end{equation}

We may summarize the results up to now in an algorithm that allows us to find
the cycles $\tau_{1},\tau_{2},\tau_{3}$, the set $E$ and therefore the
directions $\delta_{b}$ and $\delta_{c}$ associated to a Schubert form.

Let $\sigma$ be the permutation in $A\cup B\cup C$ defined by
\[
\sigma=\left(  a_{0}\ a_{p}\right)  \left(  b_{0}\ b_{q}\right)  \left(
c_{0}\ c_{s}\right)  .
\]
This permutation corresponds to \textit{"travel the bridges"} in the diagram.

\begin{algorithm}
\label{algorithm}Given a Schubert form $\left(  p/n,q/m,s/l\right)  $ the
following algorithm finds the orientation of the associated link diagram. It
provides the cycles $\tau_{1},\tau_{2},\tau_{3}$ such that $\mu=\tau_{1}%
\tau_{2}\tau_{3},$ where the cycle $\tau_{i}$ has the form given in Lemma
\ref{lemcycle}.

1. Take $I_{1}=a_{p},\tau_{1}=\mathcal{O}_{\mu}\left(  I_{1}\right)  $ and
$F_{1}=\tau_{1}^{|\tau_{1}|/2}\left(  I_{1}\right)  .$

2. If $F_{1}=I_{1}$ take $I_{2}=b_{q}$ else take $I_{2}=\sigma\left(
F_{1}\right)  .$

3. Take $\tau_{2}=\mathcal{O}_{\mu}\left(  I_{2}\right)  $ and $F_{2}=\tau
_{2}^{|\tau_{2}|/2}\left(  I_{2}\right)  .$

4. If $\sigma\left(  F_{2}\right)  \notin\left\{  I_{1},F_{1},I_{2}%
,F_{2}\right\}  $ then take $I_{3}=\sigma\left(  F_{2}\right)  $ else take
$I_{3}$ as the unique element in $\left\{  b_{q},c_{s}\right\}  -\left\{
I_{1},F_{1},I_{2},F_{2}\right\}  $.

5. Take $\tau_{3}=\mathcal{O}_{\mu}\left(  I_{3}\right)  $ and $F_{3}=\tau
_{3}^{|\tau_{3}|/2}\left(  I_{3}\right)  $.
\end{algorithm}

\section{Presentation of the 3-bridge link group\label{secgroup}}

Let $L$ be the link diagram with Schubert form $\left(  p/n,q/m,s/l\right)  $.
We have an explicit way to find the over and under presentations of the link
group of $L$, see \cite{BuZi} and \cite{CrFo}. This method requires to use the
link diagram. We will use this method, but we will replace the explicit use of
the diagram by an algorithm that uses the permutations $\phi,\gamma$ and $\mu$
and some new functions defined on the set $A\cup B\cup C$. As the description
of the over and under presentations requires an oriented link diagram, we will
always refer to the standard link diagram and orientation described in Section
\ref{diagram}. It is important to remark that we need the diagram only to
explain the construction, but the algorithm to find the presentations do not
require to draw the link diagram, it depends only on the permutations
$\phi,\gamma$ and $\mu=\phi\gamma$. As $\phi,\gamma$ depend only of the
Schubert form $\left(  p/n,q/m,s/l\right)  $, the presentation of the link
group will depend only on the integers $\left\{  p,n,q,m,s,l\right\}  $. The
algorithm is efficient and easy to implement in a software such as
\textit{Mathematica.}

\subsection{Over presentation of the 3-bridge link $\left(
p/n,q/m,s/l\right)  $}

We take meridians around the bridges as group generators, and label them by
the same name as the bridges, so we have generators $a,b$ and $c$, see Fig.
\ref{fig6}\textit{a.}%

\begin{figure}
[h]
\begin{center}
\includegraphics[
height=1.388in,
width=4.8395in
]%
{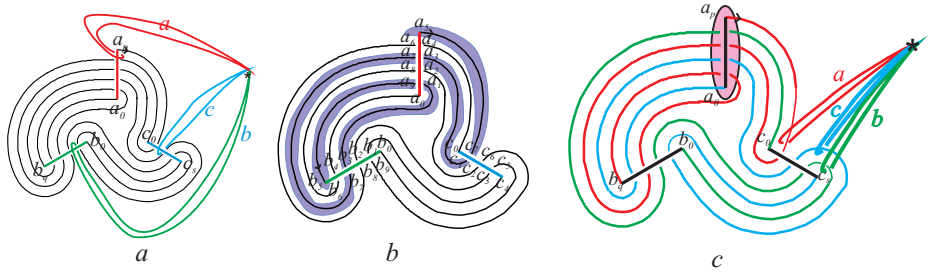}%
\caption{Generators for the over presentation in \textit{a. }and the under
presentation in \textit{c. }Path around an underarc in \textit{b}.}%
\label{fig6}%
\end{center}
\end{figure}

We find the relators by traveling the frontier of a neighborhood of the
underarcs, as shown in Fig. \ref{fig6}\textit{b}, so these paths are precisely
the orbits $\tau_{i},i=1,2,3$.

Each relator is a word in $a,b$ and $c$ constructed with the convention that
each time we cross the bridge $a$ (resp. $b,c$) we write $a^{\pm1}$ (resp.
$b^{\pm1},c^{\pm1}$) depending of the sign of the crossing, given by the
convention {\includegraphics[
height=0.2776in,
width=0.8164in
]%
{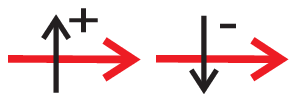}%
}
.

We replace this graphic process by defining a function $\Gamma$ that
"\textit{forgets the index but remembers the direction}". Consider
$\Gamma:A\cup B\cup C\rightarrow\left\{  a^{\pm1},b^{\pm1},b^{\pm1}\right\}  $
defined by%
\begin{align}
\Gamma\left(  a_{i}\right)   &  =\left\{
\begin{array}
[c]{cc}%
a & \text{if }0<i\leq p\\
a^{-1} & \text{otherwise}%
\end{array}
\right.  ,\Gamma\left(  b_{i}\right)  =\left\{
\begin{array}
[c]{cc}%
b^{\delta_{b}} & \text{if }0<i\leq q\\
b^{-\delta_{b}} & \text{otherwise.}%
\end{array}
\right.  ,\label{gamrel}\\
\Gamma\left(  c_{i}\right)   &  =\left\{
\begin{array}
[c]{cc}%
c^{\delta_{c}} & \text{if }0<i\leq s\\
c^{-\delta_{c}} & \text{otherwise.}%
\end{array}
\right.  \nonumber
\end{align}

The relators are $r_{1}=\Gamma\left(  \tau_{1}\right)  ,r_{2}=\Gamma\left(
\mathcal{\tau}_{2}\right)  $ and $r_{3}=\Gamma\left(  \mathcal{\tau}%
_{3}\right)  $, where $\Gamma\left(  \tau_{i}\right)  $ means the word
obtained when we apply $\Gamma$ to each element in the orbit $\tau_{i}$.

Thus we have proved the following proposition.

\begin{proposition}
\label{presover}The link group of the link $L$ given by the Schubert form
$\left(  p/n,q/m,s/l\right)  $ admits a presentation given by
\[
\pi\left(  L\right)  =\left\langle a,b,c\ |\ \Gamma\left(  \tau_{1}\right)
,\Gamma\left(  \tau_{2}\right)  ,\Gamma\left(  \tau_{3}\right)  \right\rangle
\]
were $\mu=\tau_{1}\tau_{2}\tau_{3}$ is the associated permutation and $\Gamma$
is given in (\ref{gamrel}).
\end{proposition}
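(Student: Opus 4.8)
The plan is to recognize this proposition as an instance of the standard over presentation (Wirtinger-type) theorem for link groups, as developed in \cite{CrFo} and \cite{BuZi}, specialized to the canonical diagram and to the combinatorial data $(\phi,\gamma,\mu)$ built in the previous sections. The general theorem says that, given an oriented diagram, one may take one meridian generator per overarc and one relator per underarc, the relator being read off by traversing the boundary of a regular neighborhood of that underarc and recording, with the sign fixed by the crossing convention, the meridian of each overarc passed beneath. So the whole task reduces to matching three pieces of combinatorial data to their geometric counterparts: the generators, the underarcs, and the signed crossing words.

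First I would fix the generators. The canonical diagram of Section \ref{diagram} has exactly three bridges (overarcs) $a$, $b$, $c$, so the over presentation has exactly the three meridian generators named $a,b,c$ in Fig. \ref{fig6}\textit{a}, and no others. Second, I would identify the relators with the cycles of $\mu$. By Theorem \ref{teored} the permutation $\mu=\phi\gamma$ splits into exactly three disjoint cycles $\tau_{1},\tau_{2},\tau_{3}$, and the proof of that theorem already shows that the orbit of a vertex under $\mu$ traces the frontier of a neighborhood of one underarc: it runs along the underarc from its initial endpoint in $E$ (see (\ref{extrem})) to its final endpoint, and then returns through the $\gamma$-reflected vertices. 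This is precisely the structure recorded in Lemma \ref{lemcycle}(ii). Hence each $\tau_{i}$ is exactly the closed path around one underarc that is used to form a Wirtinger relator, and there are exactly three of them, one per underarc.

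The substantive step, and the one I expect to be the main obstacle, is to verify that applying $\Gamma$ from (\ref{gamrel}) to the vertices of $\tau_{i}$ reproduces the signed word that the crossing convention assigns to that path. The point is that a vertex $a_{i}$ records a crossing of the underarc beneath bridge $a$, and the sign of that crossing is determined by which wing of the butterfly $P$ the vertex lies in, equivalently by whether its index satisfies $0<i\le p$ or not; the definition of $\Gamma$ is arranged to read off exactly this side. The care required is twofold: (i) to check that the index ranges in (\ref{gamrel}) correspond to the two sides of each bridge consistently with the counterclockwise labeling of Section \ref{diagram}, and (ii) to incorporate the orientation factors $\delta_{b}$ and $\delta_{c}$, since bridges $b$ and $c$ may be traversed in either direction according to the orientation tables of the preceding lemma, and reversing the direction of a bridge flips every crossing sign along it, which is exactly what the exponents $\pm\delta_{b}$ and $\pm\delta_{c}$ encode. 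Once these sign bookkeeping checks are carried out, $\Gamma(\tau_{i})$ equals the Wirtinger relator $r_{i}$ for underarc $i$, and the general over-presentation theorem yields the stated presentation. As usual one of the three relators is a consequence of the other two, but it is harmless to list all three.
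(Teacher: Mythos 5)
Your proposal is correct and follows essentially the same route as the paper: the paper's own ``proof'' is exactly the discussion preceding the proposition, namely taking meridians of the three bridges as generators, observing that the frontier paths around the underarcs are precisely the $\mu$-orbits $\tau_{1},\tau_{2},\tau_{3}$ (via Theorem \ref{teored} and Lemma \ref{lemcycle}), and using $\Gamma$ to convert the graphical sign-reading convention into a word. Your additional care about the wing/index correspondence and the $\delta_{b},\delta_{c}$ factors is exactly the bookkeeping the paper compresses into the phrase that $\Gamma$ ``forgets the index but remembers the direction.''
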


By the symmetry of the cycles described in Lemma \ref{lemcycle}, we may
rewrite the relators as the relations. When the Schubert form $\left(
p/n,q/m,s/l\right)  $ defines a knot, using the information in Table 1 we find
that
\[
r_{1}:aw_{a}=w_{a}b,r_{2}:bw_{b}=w_{b}c,r_{3}:cw_{c}=w_{c}a,\ \text{in the
first four cases,}%
\]
or%
\[
r_{1}:aw_{a}=w_{a}c,r_{2}:cw_{c}=w_{c}b,r_{3}:bw_{b}=w_{b}a\text{, in the last
four cases.}%
\]
For the case when it is a link we have similar relations.

At this moment we have lost the geometrical meaning of the generators, so we
may rename the generators, if necessary, and unify the two cases, so we have
the following proposition.

\begin{proposition}
\label{presoverpal}The link $L$ given by the Schubert form $\left(
p/n,q/m,s/l\right)  $ admits a presentation given by
\begin{align*}
&  \text{i. }\left\langle a,b,c\ |\ aw_{1}=w_{1}b,\ \ \ \ bw_{2}%
=w_{2}c,\ \ \ \ \ cw_{3}=w_{3}a\right\rangle \text{ if }L\text{ is a knot,}\\
&  \text{ii. }\left\langle a,b,c\ |\ aw_{1}=w_{1}b,\ \ \ \ bw_{2}%
=w_{2}a,\ \ \ \ \ cw_{3}=w_{3}c\right\rangle \text{ if }L\ \text{is a
2-component link,}\\
&  \text{iii. }\left\langle a,b,c\ |\ aw_{1}=w_{1}a,\ \ \ \ bw_{2}%
=w_{2}b,\ \ \ \ \ cw_{3}=w_{3}c\right\rangle \text{ if }L\ \text{is a
3-component link,}%
\end{align*}
were $w_{i}$ is a word in $a,b,c$ given by$\ w_{i}=\Gamma\left(
\widetilde{\tau_{i}}\right)  $ were $\Gamma$ is defined in (\ref{gamrel}) and
$\widetilde{\tau_{i}}$ is defined in (\ref{defonda}).
\end{proposition}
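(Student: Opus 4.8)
The plan is to start from Proposition \ref{presover}, where $\pi(L)=\langle a,b,c \mid \Gamma(\tau_1),\Gamma(\tau_2),\Gamma(\tau_3)\rangle$, and to reshape each relator $\Gamma(\tau_i)$ into palindromic form by exploiting the internal symmetry of the cycle $\tau_i$ recorded in Lemma \ref{lemcycle}(ii). The single computational fact that drives everything is that $\Gamma$ turns the reflection $\gamma$ into inversion away from the bridge ends: for every $x\in A\cup B\cup C$ with $x\notin E$ one has $\Gamma(\gamma(x))=\Gamma(x)^{-1}$. First I would verify this directly from \eqref{permugama} and \eqref{gamrel}; for instance $\gamma(a_i)=a_{2p-i}$, and if $0<i<p$ then $p<2p-i<2p$, so $\Gamma(a_i)=a$ while $\Gamma(a_{2p-i})=a^{-1}$. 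The range $p<i<2p$ and the $b$- and $c$-cases (where the fixed sign $\delta_b,\delta_c$ simply factors out) are identical. The only points where this identity fails are exactly the fixed points of $\gamma$, namely the set $E$ of \eqref{extrem}, which is precisely where the endpoints $I_i,F_i$ live.

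Next I would run the telescoping. By Lemma \ref{lemcycle}(ii) we have $\tau_i=(I_i,z_1,\ldots,z_k,F_i,\gamma(z_k),\ldots,\gamma(z_1))$ with every $z_j\notin E$, so applying $\Gamma$ termwise and using the fact above gives
\[
\Gamma(\tau_i)=\Gamma(I_i)\Gamma(z_1)\cdots\Gamma(z_k)\,\Gamma(F_i)\,\Gamma(z_k)^{-1}\cdots\Gamma(z_1)^{-1}.
\]
Writing $w_i=\Gamma(\widetilde{\tau_i})=\Gamma(I_i)\Gamma(z_1)\cdots\Gamma(z_k)$ as in \eqref{defonda}, the trailing block equals $w_i^{-1}\Gamma(I_i)$, so the relator collapses to $\Gamma(\tau_i)=w_i\,\Gamma(F_i)\,w_i^{-1}\Gamma(I_i)$. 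Setting this equal to $1$ and rearranging produces the single clean relation $\Gamma(I_i)\,w_i\,\Gamma(F_i)=w_i$, equivalently $\Gamma(I_i)\,w_i=w_i\,\Gamma(F_i)^{-1}$.

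It then remains to evaluate $\Gamma(I_i)$ and $\Gamma(F_i)$ from the endpoint data. The key observation is that every initial endpoint $I_i$ evaluates under $\Gamma$ to a positive generator and every final endpoint $F_i$ to the inverse of a generator: this is exactly what the choice of $\delta_b,\delta_c$ in Section \ref{secorien} arranges, since $\Gamma(a_p)=a$, $\Gamma(a_0)=a^{-1}$, $\Gamma(b_q)=b^{\delta_b}$, $\Gamma(b_0)=b^{-\delta_b}$, and similarly for $c$, and the travel direction is fixed so that one enters a cycle at the $+$ end and leaves at the $-$ end. Going row by row through Table 1 gives the pairs $(\Gamma(I_i),\Gamma(F_i))$ equal to $(a,b^{-1}),(b,c^{-1}),(c,a^{-1})$ in the first four rows and the cyclic relabelling $(a,c^{-1}),(c,b^{-1}),(b,a^{-1})$ in the last four; thus for a knot we recover the two relation triples displayed before the statement, which coincide after interchanging the names $b$ and $c$. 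Table 2 yields form (ii), and the $3$-component pattern — where each cycle contains both ends of a single bridge, so $\Gamma(I_i)$ and $\Gamma(F_i)$ are inverse powers of the \emph{same} generator — yields the commuting relations of form (iii).

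I expect the main obstacle to be organisational rather than conceptual: one must confirm that across all rows of Tables 1 and 2, together with the $3$-component case, the initial ends are uniformly positive and the final ends uniformly negative under $\Gamma$, and then check that the renaming of generators used to merge the several sub-cases into a single relation set in each of (i), (ii), (iii) is an honest relabelling that does not change the group. The actual algebra — the cancellation producing $w_i\,\Gamma(F_i)\,w_i^{-1}\Gamma(I_i)$ — is short once the identity $\Gamma(\gamma(x))=\Gamma(x)^{-1}$ is established.
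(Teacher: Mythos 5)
Your proposal is correct and follows essentially the same route as the paper: starting from Proposition \ref{presover}, using the symmetry of the cycles in Lemma \ref{lemcycle} to collapse each relator $\Gamma(\tau_i)$ into a relation of the form $\Gamma(I_i)w_i = w_i\Gamma(F_i)^{-1}$, reading off the endpoint pairs from Tables 1 and 2, and relabelling generators to unify the sub-cases. The paper leaves these steps as a brief remark before the proposition, whereas you make explicit the key identity $\Gamma(\gamma(x))=\Gamma(x)^{-1}$ for $x\notin E$ and the resulting telescoping; this is a faithful filling-in of the same argument, not a different one.
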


We know that in the case of knots, one of the relations is redundant, but for
practical reasons we prefer to have all of them and, in particular
computations, we omit the longest relation, that we do not know in advance
which one will be.$\ $This contrasts with the under presentation that we will
introduce in the next section, in which we know the lengths of the words involved.

\begin{lemma}
The sum of the lengths of the words $w_{1},w_{2}$ and $w_{3}$ is $p+q+s-3.$
\end{lemma}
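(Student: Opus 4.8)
The plan is to reduce the statement to a count of cycle lengths and then exploit the fact that the three cycles of $\mu$ partition the whole vertex set $A\cup B\cup C$. First I would invoke Lemma \ref{lemcycle}(ii): each cycle has the palindromic form $\tau_i=\left( I_i,z_1,\dots,z_{k_i},F_i,\gamma(z_{k_i}),\dots,\gamma(z_1)\right)$, so that $\left\vert\tau_i\right\vert=2k_i+2$ and the initial segment $\widetilde{\tau_i}=\left\{ I_i,z_1,\dots,z_{k_i}\right\}$ satisfies $\left\vert\widetilde{\tau_i}\right\vert=k_i+1=\left\vert\tau_i\right\vert/2$. The word $w_i=\Gamma\left(\widetilde{\tau_i}\right)$ records one generator for each interior vertex $z_1,\dots,z_{k_i}$, while the leading vertex $I_i\in E$ is a bridge endpoint that supplies the meridian generator standing on the left of the associated relation $aw_i=w_i(\cdot)$ rather than a conjugating letter. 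Hence $\left\vert w_i\right\vert=k_i=\left\vert\tau_i\right\vert/2-1$.

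Next I would use Theorem \ref{teored}, which guarantees that $\mu=\tau_1\tau_2\tau_3$ is a product of three disjoint cycles with $\left\vert\tau_i\cap E\right\vert=2$; since $\mu$ is a permutation of $A\cup B\cup C$ these cycles exhaust the set, and because the vertices of $P$, $Q$, $S$ are labelled $a_0,\dots,a_{2p-1}$, $b_0,\dots,b_{2q-1}$, $c_0,\dots,c_{2s-1}$ we have $\left\vert A\right\vert=2p$, $\left\vert B\right\vert=2q$, $\left\vert C\right\vert=2s$. Therefore
\[
\left\vert\tau_1\right\vert+\left\vert\tau_2\right\vert+\left\vert\tau_3\right\vert=\left\vert A\right\vert+\left\vert B\right\vert+\left\vert C\right\vert=2p+2q+2s .
\]
Combining this with the per-cycle count and summing the three endpoint contributions yields
\[
\left\vert w_1\right\vert+\left\vert w_2\right\vert+\left\vert w_3\right\vert=\sum_{i=1}^{3}\left( \frac{\left\vert\tau_i\right\vert}{2}-1\right)=(p+q+s)-3 ,
\]
which is the claim.

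The only delicate point is the per-cycle letter count $\left\vert w_i\right\vert=\left\vert\tau_i\right\vert/2-1$: one must see clearly that, of the two $E$-vertices $I_i,F_i$ lying in $\tau_i$, neither is a conjugating crossing of $w_i$ — the endpoint $F_i$ is excluded from $\widetilde{\tau_i}$ by the definition (\ref{defonda}), and $\Gamma(I_i)$ is absorbed as the leading meridian of the relation in Proposition \ref{presoverpal}. Everything else is the bookkeeping already packaged in Lemma \ref{lemcycle}. As an independent sanity check I would match this against the earlier lemma asserting that the reduced $3$-bridge diagram has $p+q+s-3$ crossings: each crossing is an overpass recorded in exactly one of the half-words $w_i$, so $\sum_i\left\vert w_i\right\vert$ must equal the crossing number, confirming the total from a second direction.
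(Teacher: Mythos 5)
The paper states this lemma without proof, so there is no argument of the authors' to compare yours against; judged on its own terms, your proof is correct, and it is almost certainly the intended argument. Your two ingredients do the job: Lemma \ref{lemcycle}(ii) gives $\vert\tau_i\vert=2k_i+2$, and Theorem \ref{teored} (together with the standing assumption stated after it) guarantees that $\tau_1,\tau_2,\tau_3$ exhaust the $2p+2q+2s$ vertices of $A\cup B\cup C$, whence $\sum_{i=1}^{3}k_i=\frac{1}{2}\bigl(2p+2q+2s\bigr)-3=p+q+s-3$. The point you flag as delicate is indeed the crux, and you resolve it the right way. Read literally, the paper's definition $w_i=\Gamma\bigl(\widetilde{\tau_i}\bigr)$ with $\widetilde{\tau_i}=\{I_i,z_1,\dots,z_{k_i}\}$ from (\ref{defonda}) produces a word of $k_i+1$ letters, which would make the total $p+q+s$ and contradict the lemma; so the lemma forces your reading. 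That reading is the coherent one: since $\Gamma(\gamma(z))=\Gamma(z)^{-1}$ for $z\notin E$, the relator is $\Gamma(\tau_i)=\Gamma(I_i)\,u_i\,\Gamma(F_i)\,u_i^{-1}$ with $u_i=\Gamma(z_1)\cdots\Gamma(z_{k_i})$, so the conjugating word in the relation of Proposition \ref{presoverpal} is really $u_i$, of length $k_i$ (equivalently, the leading letter $\Gamma(I_i)$ in $\Gamma\bigl(\widetilde{\tau_i}\bigr)$ is absorbed: $a(au_i)=(au_i)b$ is equivalent to $au_i=u_ib$, so including it changes nothing in the group but shifts every length by one). Your closing cross-check is more than a sanity check: each crossing of the diagram is one passage of an underarc beneath a bridge, i.e.\ one pair $\bigl(z_j,\gamma(z_j)\bigr)$ in one cycle, i.e.\ one letter of one $u_i$, so the identification of $\sum_i\vert w_i\vert$ with the crossing number $p+q+s-3$ is itself a complete second proof. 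In short: your argument is sound, and it has the added value of pinpointing an off-by-one inconsistency between (\ref{defonda}), Proposition \ref{presoverpal} and this lemma that the paper ought to correct.
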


\begin{lemma}
When $L$ is a knot, the peripheral system of the group is given by
$\left\langle a,l\right\rangle $ with $l=w_{1}w_{2}w_{3}a^{-k}$ and $k$ is the
exponent sum of the word $w_{1}w_{2}w_{3}$.
\end{lemma}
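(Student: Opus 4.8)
The plan is to identify $a$ as a meridian and to recognize $l=w_{1}w_{2}w_{3}a^{-k}$ as the preferred longitude, by pairing an algebraic computation in the over presentation with the geometric meaning of the words $w_{i}$. The two properties to verify are that $a$ and $l$ commute (so they lie together on the boundary torus) and that $l$ is null-homologous (so it is the \emph{preferred} longitude), after which the standard peripheral-system theory does the rest.

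First I would work in the knot presentation of Proposition \ref{presoverpal}(i), namely $\pi(L)=\langle a,b,c \mid aw_{1}=w_{1}b,\ bw_{2}=w_{2}c,\ cw_{3}=w_{3}a\rangle$. Rewriting the relations as $b=w_{1}^{-1}aw_{1}$, $c=w_{2}^{-1}bw_{2}$, $a=w_{3}^{-1}cw_{3}$ and substituting successively gives $a=(w_{1}w_{2}w_{3})^{-1}a(w_{1}w_{2}w_{3})$. Writing $W=w_{1}w_{2}w_{3}$, this says $aW=Wa$, so the meridian $a$ commutes with $W$ and hence with $l=Wa^{-k}$. This is the algebraic input placing $a$ and $l$ in a common abelian subgroup. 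Next I would check that $l$ is null-homologous: for a knot the abelianization of $\pi(L)$ is $\mathbb{Z}$, generated by the common image $t$ of the conjugate meridians $a,b,c$, so $W\mapsto t^{k}$ by the definition of $k$ as the exponent sum of $W$, while $a^{-k}\mapsto t^{-k}$; thus $l\mapsto t^{0}=1$.

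The geometric heart of the argument is to show that $W$ represents a curve on the boundary torus parallel to the knot. Each $w_{i}=\Gamma(\widetilde{\tau_{i}})$ records, with signs, the bridges crossed while traveling along one side of the $i$-th underarc, and in the knot case the whole link is traversed in the order $\tau_{1},\tau_{2},\tau_{3}$. Consequently $W=w_{1}w_{2}w_{3}$ is exactly the word of meridians met in one full trip parallel to $L$, i.e. a push-off of the knot, and the factor $a^{-k}$ is the homological correction that turns this push-off into the preferred longitude. Combined with the commuting relation, this exhibits $(a,l)$ as a meridian--longitude pair, so $\langle a,l\rangle$ is the peripheral system (see \cite{CrFo}).

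The main obstacle I anticipate is this last, geometric step: making precise that reading $w_{1}w_{2}w_{3}$ in sequence corresponds to a single parallel traversal of the knot, so that $W$ is genuinely a longitude rather than merely some element commuting with $a$. The commuting relation and the exponent-sum computation are routine once that identification is in place, and the classical fact that a meridian together with a null-homologous diagram push-off generates the peripheral subgroup then finishes the proof.
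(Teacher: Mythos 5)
Your proposal is correct, and its algebraic core coincides with the paper's entire proof: the paper verifies only the commutation relation, by the one-line computation
\[
al=aw_{1}w_{2}w_{3}a^{-k}=w_{1}bw_{2}w_{3}a^{-k}=w_{1}w_{2}cw_{3}a^{-k}=w_{1}w_{2}w_{3}aa^{-k}=la,
\]
which is exactly your substitution $b=w_{1}^{-1}aw_{1}$, $c=w_{2}^{-1}bw_{2}$, $a=w_{3}^{-1}cw_{3}$ read in the other direction. Everything else you propose --- the abelianization argument that $l$ is null-homologous, and the geometric identification of $w_{1}w_{2}w_{3}$ as the word of meridians read off by a push-off of the knot traversed in the order $\tau_{1},\tau_{2},\tau_{3}$ --- is material the paper omits entirely. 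In that sense your write-up is more complete than the published proof: commutation alone shows only that $a$ and $l$ generate an abelian subgroup, not that $l$ is the preferred longitude, and the step you flag as the main obstacle (that concatenating $w_{1},w_{2},w_{3}$ corresponds to a single parallel traversal of $L$, in the style of \cite{CrFo}) is precisely what the paper takes for granted. So the two arguments agree at their core, but you supply the homological and geometric justification that the paper's proof leaves implicit.
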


\begin{proof}
By direct computation we have
\[
al=aw_{1}w_{2}w_{3}a^{-k}=w_{1}bw_{2}w_{3}a^{-k}=w_{1}w_{2}cw_{3}a^{-k}%
=w_{1}w_{2}w_{3}aa^{-k}=la
\]

\end{proof}

\subsection{Under presentation of the 3-bridge link $\left(
p/n,q/m,s/l\right)  $}

The under presentation is the dual presentation of the over presentation.
Those dual presentations play a central role in the proofs of properties of
the knot group and the Alexander polynomial of knots, see \cite{CrFo}. By
studying these presentations we find a similar algorithm to the known one to
find the 2-bridge link group, that has an explicit formula depending of $p$
and $q$. Of course, we need a more elaborate algorithm.

We take as generators of $\pi\left(  L\right)  $ the meridians around the
underarcs, see \ref{fig5}\textit{c. }Again, the key point is to use the cyclic
decomposition of $\mu$. We call $a$ (resp. $b$ and $c)$ the generators
corresponding to the underarc described by $\tau_{1}$ (resp. $\tau_{2}$ and
$\tau_{3}$).

The relations are given by traveling the boundary of each butterfly, that
describe simple closed paths around the bridges.

So the first path is given by $\left\{  a_{0},\cdots,a_{2p-1}\right\}  $, the
second by $\left\{  b_{0},\cdots,a_{2q-1}\right\}  $ and the third by
$\left\{  c_{0},\cdots,c_{2s-1}\right\}  $. The graphical procedure to find
the relators is: Each time we cross the link we encounter one of the vertices
in the set $A\cup B\cup C,$ we identify the underarc, say $x$, and the sign of
the crossing, $sg,$ and write $x^{sg}$, with $x\in\left\{  a,b,c\right\}  $
and $sg=\pm1$.

Again, this procedure will be established by defining a function$\ \rho$,
similar to the one defined in (\ref{gamrel}), that identifies the underarc that
contains the vertex and the direction of the crossing. Let $\rho:A\cup B\cup
C\rightarrow\left\{  a^{\pm1},b^{\pm1},c^{\pm1}\right\}  $ defined by
\[
\text{If }x\in E,\ \rho\left(  x\right)  =\left\{
\begin{array}
[c]{ll}%
a & \text{if }x\in\widetilde{\tau_{1}}\\
a^{-1} & \text{if }x\notin\widetilde{\tau_{1}}\ \\
b^{\delta_{b}} & \text{if }x\in\widetilde{\tau_{2}}\\
b^{-\delta_{b}} & \text{if }x\notin\widetilde{\tau_{2}}\ \\
c^{\delta_{c}} & \text{if }x\in\widetilde{\tau_{3}}\\
c^{-\delta_{s}} & \text{if }x\notin\widetilde{\tau_{3}}\
\end{array}
\right.  \text{.\ \ \ If\ }x\notin E,\ \rho\left(  x\right)  =\left\{
\begin{array}
[c]{ll}%
a & \text{if }x\in\widetilde{\tau_{1}}\\
a^{-1} & \text{if }\gamma\left(  x\right)  \in\widetilde{\tau_{1}}\ \\
b^{\delta_{b}} & \text{if }x\in\widetilde{\tau_{2}}\\
b^{-\delta_{b}} & \text{if }\gamma\left(  x\right)  \in\widetilde{\tau_{2}%
}\ \\
c^{\delta_{c}} & \text{if }x\in\widetilde{\tau_{3}}\\
c^{-\delta_{s}} & \text{if }\gamma\left(  x\right)  \in\widetilde{\tau_{3}}\
\end{array}
\right.
\]
\label{defro}

The relators are
\begin{align*}
s_{a}  &  =\rho\left(  a_{0}a_{1}\cdots a_{2p-1}\right)  =\rho\left(
a_{0}\right)  \rho\left(  a_{1}\right)  \cdots\rho\left(  a_{2p-1}\right)  ,\\
s_{b}  &  =\rho\left(  b_{0}b_{1}\cdots b_{2p-1}\right)  =\rho\left(
b_{0}\right)  \rho\left(  b_{1}\right)  \cdots\rho\left(  b_{2q-1}\right)  ,\\
s_{c}  &  =\rho\left(  b_{0}b_{1}\cdots b_{2p-1}\right)  =\rho\left(
c_{0}\right)  \rho\left(  c_{1}\right)  \cdots\rho\left(  c_{2s-1}\right)  .
\end{align*}

For the symmetry of the functions and the cycles given in Lemma \ref{lemcycle}%
, we have that if $\gamma\left(  x\right)  \neq x,$ $\rho\left(  \gamma\left(
x\right)  \right)  =\rho\left(  x\right)  ^{-1}$, therefore if we take the
words%
\begin{equation}
u_{a}=\rho\left(  a_{1}\right)  \cdots\rho\left(  a_{p-1}\right)
,\ \ \ u_{b}=\rho\left(  b_{1}\right)  \cdots\rho\left(  b_{q-1}\right)
,\ \ \ \ \ u_{c}=\rho\left(  c_{1}\right)  \cdots\rho\left(  c_{s-1}\right)  ,
\label{defpalu}%
\end{equation}
the relators become the relations
\[
cu_{a}=u_{a}a,\ \ \ \ \ \ \ \ \ \ au_{b}=u_{b}b,\ \ \ \ \ \ bu_{c}=u_{c}c
\]
or%
\[
bu_{a}=u_{a}a,\ \ \ \ \ \ \ \ \ \ au_{c}=u_{c}c,\ \ \ \ \ \ cu_{b}=u_{b}a.
\]
Note that the lengths of the words $u_{a},u_{b}$ and $u_{c}$ are $p-1,$ $q-1$
and $s-1$,$\ $respectively. In this case it is not possible to change the
variable names, because we want to have the information about the word lengths.

Now the peripheral system is given by $\left\langle a,l^{\prime}\right\rangle
$ where $l^{\prime}=u_{a}u_{b}u_{c}f^{-e}$, were $e$ is the exponent sum of
the word $u_{a}u_{b}u_{c}$. We have proved the following theorem:

\begin{theorem}
\label{presunder}The link $L$ given by the butterfly $\left(
p/n,q/m,s/l\right)  $ admits a presentation given by:

i. If $L$ is a knot
\begin{align*}
&  \left\langle a,b,c\ |\ cu_{a}=u_{a}a,\ \ \ \ \ \ \ \ \ \ au_{b}%
=u_{b}b,\ \ \ \ \ \ bu_{c}=u_{c}c\right\rangle ,\ \ \ \ \ \ \ \ \text{or}\\
&  \left\langle a,b,c\ |\ bu_{a}=u_{a}a,\ \ \ \ \ \ \ \ \ \ au_{c}%
=u_{c}c,\ \ \ \ \ \ cu_{b}=u_{b}a.\right\rangle ,
\end{align*}

ii. If $L$ is a 3-component link
\[
\left\langle a,b,c\ |\ au_{a}=u_{a}a,\ \ \ \ \ \ \ \ \ \ bu_{b}=u_{b}%
b,\ \ \ \ \ \ cu_{c}=u_{c}c\right\rangle ,
\]
with $u_{a},u_{b}$ and $u_{c}$ words of length $p-1,$ $q-1$ and $s-1$%
,$\ $respectively, defined by (\ref{defpalu}).
\end{theorem}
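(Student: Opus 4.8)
The plan is to derive the under presentation directly from the standard Wirtinger-type computation for the dual presentation, using the geometric description already established in the excerpt. First I would fix the generators: the meridians $a,b,c$ around the three underarcs, which correspond to the three cycles $\tau_1,\tau_2,\tau_3$ of $\mu=\phi\gamma$. The relators arise by traveling the boundary of each butterfly $P$, $Q$, $S$, that is, by reading off the sequences $a_0a_1\cdots a_{2p-1}$, $b_0b_1\cdots b_{2q-1}$ and $c_0c_1\cdots c_{2s-1}$ and recording, at each vertex, which underarc is crossed and with what sign. This bookkeeping is exactly what the function $\rho$ of (\ref{defro}) encodes, so the raw relators are $s_a=\rho(a_0a_1\cdots a_{2p-1})$, $s_b=\rho(b_0\cdots b_{2q-1})$ and $s_c=\rho(c_0\cdots c_{2s-1})$.

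The central simplification comes from the reflection symmetry $\gamma$. Because each butterfly is split into two wings by its bridge, and $\gamma$ is the reflection along the bridge with fixed-point set $E$ (Lemma \ref{lemmagama}), the second half of each boundary word is the $\gamma$-image of the first half read backwards. I would prove the key identity $\rho(\gamma(x))=\rho(x)^{-1}$ whenever $\gamma(x)\neq x$: this follows immediately from the definition of $\rho$, since $x\in\widetilde{\tau_i}$ if and only if $\gamma(x)\notin\widetilde{\tau_i}$ (the two halves of a cycle, by the structure in Lemma \ref{lemcycle}(ii)), and the two corresponding clauses of $\rho$ differ precisely by a sign in the exponent. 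Using this cancellation, the full boundary word $s_a$ of length $2p$ collapses: the contributions from $a_p,a_{p+1},\dots,a_{2p-1}$ invert and reverse those from $a_0,a_1,\dots,a_{p-1}$, and after setting $u_a=\rho(a_1)\cdots\rho(a_{p-1})$ (length $p-1$) the relator $s_a$ reduces to a conjugation relation of the form $xu_a=u_a a$, where the letter $x$ is determined by which underarc passes through the bridge endpoints $a_0,a_p$. The same argument applied to $Q$ and $S$ yields the relations for $u_b$ and $u_c$.

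The remaining step is to identify, case by case, which generator plays the role of $x$ in each of the three collapsed relations, and this is where the orientation data from Table 1 and Table 2 is used. In the knot case the cyclic order in which the underarcs meet the bridge endpoints forces the pattern $cu_a=u_aa$, $au_b=u_bb$, $bu_c=u_cc$ (or the alternate pattern coming from the last four rows of Table 1); in the three-component case each bridge endpoint pair lies on a single cycle, forcing the diagonal relations $au_a=u_aa$, $bu_b=u_bb$, $cu_c=u_cc$. I expect the main obstacle to be precisely this case analysis: one must check that the labeling conventions for $I_i,F_i$ and the signs $\delta_b,\delta_c$ fixed in Section \ref{secorien} line up consistently with $\rho$ so that the conjugating letter is read off correctly in every row of the tables. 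Once that matching is verified, the word-length claims ($|u_a|=p-1$, etc.) are immediate from (\ref{defpalu}), and the peripheral system statement follows by the same abelianization-of-exponents computation used in the over-presentation lemma. Thus the theorem follows.
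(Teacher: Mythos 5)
Your proposal follows essentially the same route as the paper: the same generators (meridians of the underarcs indexed by the cycles $\tau_1,\tau_2,\tau_3$), the same raw relators $\rho(a_0\cdots a_{2p-1})$, $\rho(b_0\cdots b_{2q-1})$, $\rho(c_0\cdots c_{2s-1})$ from the butterfly boundaries, the same key cancellation $\rho(\gamma(x))=\rho(x)^{-1}$ via the palindromic structure of Lemma \ref{lemcycle}, and the same collapse to conjugation relations with the conjugating letters identified through the orientation tables. Your justification of the identity $\rho(\gamma(x))=\rho(x)^{-1}$ is in fact slightly more explicit than the paper's, which simply asserts it from symmetry, so the proposal is correct and matches the paper's argument.
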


If $L$ is a 2-component link there are six possible combination for the
presentation, that are the natural variations of $\left\langle
a,b,c\ |\ au_{a}=u_{a}a,\ cu_{b}=u_{b}b,\ bu_{c}=u_{c}c\right\rangle .$

\textbf{Note:} This construction does not depend on the fact that $p\geq q\geq
s$, nor that we are working with type I butterfly. So we may use it in a more
general way. However, if we take the Schubert form $\left(
p/n,q/m,s/l\right)  $ we know that in the knot case one of the relations is
redundant, and in this presentation we know that the longest is the first one,
so usually that is the one we eliminate.

\section{Special family: $(p/n,p/n,p/n)$}

In general we do not have an exact pattern for a 3-bridge link group, as the
one we encounter for 2-bridge links, see \cite{Mur}, but there are families of
3-bridge links with a very regular pattern for the fundamental group. One of
them is the family of links with Schubert form $(p/n,p/n,p/n)$, for integers
$1\leq n\leq p$. This family contains: Borromean rings $\left(
5/2,5/2,5/2\right)  $, the pretzel link $P\left(  p,p,p\right)  $, that
corresponds to $\left(  2p/p,2p/p,2p/p\right)  $; the toroidal knot $T\left(
3,p\right)  $ and its mirror image $T\left(  3,-p\right)  $, that corresponds
to $(p/1,p/1,p/1)$ and to $(p/p,p/p,p/p)$, respectively. The standard diagrams
of the links in this family have symmetries of order 2 and 3.

\begin{proposition}
\label{proppalw}For the link $L$ with Schubert form $(p/n,p/n,p/n)$ there
exists a word $w\left(  x,y,z\right)  $ in the variables $x,y$ and $z,$ such
that if $w_{a}=w\left(  a,b,c\right)  ,w_{b}=w\left(  b,c,a\right)  $ and
$w_{c}=w\left(  c,a,b\right)  $ then:

i. If ~$L$ is a knot, the knot group has the presentation
\[
\left\langle a,b,c\ |\ aw_{a}=w_{a}b,bw_{b}=w_{b}c,cw_{c}=w_{c}a\right\rangle
\text{.}%
\]

2. If $L$ is a link, it has 3 components and the link group has the
presentation
\[
\left\langle a,b,c\ |\ aw_{a}=w_{a}a,bw_{b}=w_{b}b,cw_{c}=w_{c}c\right\rangle
\text{.}%
\]

\end{proposition}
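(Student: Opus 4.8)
The plan is to exploit the manifest order‑three symmetry of the symmetric data $p=q=s$, $n=m=l$ (so that $t=v=w=p$ by (\ref{internum})). Define the rotation $\theta\colon A\cup B\cup C\to A\cup B\cup C$ by $\theta(a_i)=b_i$, $\theta(b_i)=c_i$, $\theta(c_i)=a_i$ (the indices match since $2p=2q=2s$). First I would check, directly from (\ref{permugama}) and (\ref{permufi}), that $\theta$ commutes with both $\gamma$ and $\phi$: for $\gamma$ this is immediate, and for $\phi$ one verifies that $\theta$ carries each of the three families of transpositions in (\ref{permufi}) onto the next (the $a$--$b$ family to the $b$--$c$ family, and so on), which works precisely because $n=m=l$ and $t=v=w$. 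Hence $\theta\mu=\mu\theta$ with $\theta^3=\mathrm{id}$, and $\theta$ permutes the fixed‑point set $E$ as the two $3$‑cycles $(a_0\,b_0\,c_0)$ and $(a_p\,b_q\,c_s)$.

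From $\theta\mu=\mu\theta$ I draw two consequences. Since $\theta(\mathcal{O}_\mu(a_0))=\mathcal{O}_\mu(b_0)$ and $\theta(a_p)=b_q$, the three conditions of Theorem \ref{teoclaslink} are equivalent: $a_p\in\mathcal{O}_\mu(a_0)$ iff $b_q\in\mathcal{O}_\mu(b_0)$ iff $c_s\in\mathcal{O}_\mu(c_0)$. They are therefore all true or all false, so $L$ is either a knot or a $3$‑component link and the $2$‑component case is impossible; this already establishes the first assertion of part (2). Second, $\theta$ sends cycles of $\mu$ to cycles of $\mu$, and it cannot fix $\mathcal{O}_\mu(a_p)$ (that cycle would then contain $\{a_p,b_q\}\subset E$, contradicting $|\tau_i\cap E|=2$ from Theorem \ref{teored}, since $\theta$ does not stabilize $\{a_p,b_q\}$); hence $\theta$ acts as a $3$‑cycle on the three cycles of $\mu$, carrying $\mathcal{O}_\mu(a_p)\to\mathcal{O}_\mu(b_q)\to\mathcal{O}_\mu(c_s)\to\mathcal{O}_\mu(a_p)$.

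The core computation is that $\theta$ intertwines $\Gamma$ with the cyclic letter substitution $\kappa$ sending $a\mapsto b\mapsto c\mapsto a$ on $\{a^{\pm1},b^{\pm1},c^{\pm1}\}$ (preserving exponents). I would first record that in both cases the traversal of Section \ref{secorien} gives $\delta_b=\delta_c=1$: in the $3$‑component case this is the stated convention, and in the knot case it follows by tracking Algorithm \ref{algorithm}, since the symmetry forces $F_1\in\{b_0,c_0\}$, so each of the bridges $b,c$ is crossed from its $0$‑end to its $q$‑ or $s$‑end. With $\delta_b=\delta_c=1$ and $p=q=s$, definition (\ref{gamrel}) yields $\Gamma(\theta(x))=\kappa(\Gamma(x))$ for every $x$, because $\theta$ preserves the index $i$ and the sign condition ``$0<i\le p$'' is the same on each butterfly. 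Combined with $\theta\mu=\mu\theta$ and the matching of initial points $\theta(I_1)=I_2$, $\theta(I_2)=I_3$ (immediate in the $3$‑component case from $\tau_k=\mathcal{O}_\mu(\theta^{k-1}(a_p))$), this gives $\theta(\widetilde{\tau_1})=\widetilde{\tau_2}$ and $\theta(\widetilde{\tau_2})=\widetilde{\tau_3}$ as ordered words, whence $w_2=\kappa(w_1)$ and $w_3=\kappa(w_2)$ for $w_i=\Gamma(\widetilde{\tau_i})$.

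Setting $w(x,y,z)$ to be the word $w_1=\Gamma(\widetilde{\tau_1})$ read with $a,b,c$ replaced by $x,y,z$, the previous step reads $w_a=w(a,b,c)$, $w_b=w(b,c,a)$, $w_c=w(c,a,b)$; substituting into Proposition \ref{presoverpal} (case i for a knot, case iii for a $3$‑component link) produces exactly the two stated presentations. The main obstacle is the orientation bookkeeping in the knot case: one must confirm $\delta_b=\delta_c=1$ and that Algorithm \ref{algorithm} lists the cycles in the cyclic order $\mathcal{O}_\mu(a_p),\mathcal{O}_\mu(b_q),\mathcal{O}_\mu(c_s)$ rather than the reversed order $\mathcal{O}_\mu(a_p),\mathcal{O}_\mu(c_s),\mathcal{O}_\mu(b_q)$. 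If the anti‑cyclic order occurs, the three relators are still the cyclic images $R,\kappa(R),\kappa^2(R)$ of $R=\Gamma(\tau_1)$, so the presentation keeps the claimed cyclic shape after the same relabeling that passes from the ``last four cases'' to Proposition \ref{presoverpal}; making this reduction explicit is the one genuinely fiddly point.
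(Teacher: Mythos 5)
Your proposal is correct and follows essentially the same route as the paper, whose entire proof is the one-line instruction ``Study the symmetry of the link diagram'': you have simply carried out that study rigorously, encoding the order-three symmetry as the vertex rotation $\theta$ commuting with $\gamma$, $\phi$ and hence $\mu$, and tracking its effect on the cycles, on $E$, and on $\Gamma$ via the letter substitution $\kappa$. Your version supplies exactly the details the paper leaves to the reader (ruling out the $2$-component case, pinning down $\delta_b=\delta_c=1$ via $F_1\in\{b_0,c_0\}$, and handling the anti-cyclic ordering by the $b\leftrightarrow c$ relabeling), and is sound.
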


\begin{proof}
Study the symmetry of the link diagram.
\end{proof}

\begin{example}
1. The Borromean rings have Schubert normal form $\left(  5/2,5/2,5/2\right)
$ and $w\left(  x,y,z\right)  =yz^{-1}y^{-1}z$.

2. The knot $8_{19}\ $in Rolfsen\'{}s
 table has Schubert normal form$\ \left(  4/1,4/1,4/1\right)  $ and $w\left(
x,y,z\right)  =zyx$. Note that it is the toroidal knot $T\left(  3,4\right)
$. In general, the toroidal link $\left(  p/1,p/1,p/1\right)  $ is a
3-component link if $p\equiv1\ \operatorname{mod}3$ and it is a knot in the
other cases; and the word $w$ in Proposition \ref{proppalw} is
\[
w\left(  x,y,z\right)  =\left\{
\begin{array}
[c]{cc}%
\left(  zyx\right)  ^{p/3} & \text{If }p\equiv1\ \operatorname{mod}3\\
\left(  zyx\right)  ^{[p/3]}z & \text{If }p\equiv2\ \operatorname{mod}3\\
\left(  zyx\right)  ^{[p/3]}zy & \text{If }p\equiv0\ \operatorname{mod}3,
\end{array}
\right.
\]
were $\left[  m\right]  $ means the integer part of $m$.

3. The knot $9_{35}\ $in Rolfsen\'{}s
 table has Schubert normal form$\ \left(  6/3,6/3,6/3\right)  $ and $w\left(
x,y,z\right)  =z^{-1}xz^{-1}yz^{-1}$. Note that it is the Pretzel knot
$\left(  3,3,3\right)  $. In general the Pretzel link $\left(
2p/p,2p/p,2p/p\right)  $ is a knot if $p$ is odd and a 3-component link if $p$
is even and the word $w$ in Proposition \ref{proppalw} is
\[
w\left(  x,y,z\right)  =\left\{
\begin{array}
[c]{cc}%
\left(  z^{-1}x\right)  ^{\left[  p/2\right]  }z^{-1}\left(  z^{-1}y\right)
^{\left[  p/2\right]  } & \text{If }p\ \text{is odd}\\
\left(  z^{-1}x\right)  ^{(p-2)/2}z^{-1}xy^{-1}\left(  xy^{-1}\right)
^{\left(  p-2\right)  /2} & \text{If }p\ \text{is even.}%
\end{array}
\right.
\]

\end{example}

\end{document}